\theoremstyle{plain}
	\newtheorem{theorem}{Theorem}[section]
	\newtheorem{lemma}[theorem]{Lemma}
	\newtheorem{corollary}[theorem]{Corollary}
        \newtheorem{conjecture}[theorem]{Conjecture}
\theoremstyle{definition} 
	\newtheorem{remark}[theorem]{Remark}
	\newtheorem{definition}[theorem]{Definition}
	\newtheorem{example}[theorem]{Example}
\begin{document}
\title{On iterated circumcenter sequences}
\author[S. Kanda]{Shuho Kanda}
\address{Graduate School of Mathematical Sciences, University of Tokyo, 3-8-1 Komaba, Meguro-ku, Tokyo 153-8914, Japan}
\email{shuho@ms.u-tokyo.ac.jp}
\author[J. Koizumi]{Junnosuke Koizumi}
\address{RIKEN iTHEMS, Wako, Saitama 351-0198, Japan}
\email{junnosuke.koizumi@riken.jp}

\date{\today}
\thanks{}
\subjclass{51M04, 52C35}

\begin{abstract}
    An iterated circumcenter sequence (ICS) in dimension $d$ is a sequence of points in $\mathbb{R}^d$ where each point is the circumcenter of the preceding $d+1$ points.
    The purpose of this paper is to completely determine the parameter space of ICSs and its subspace consisting of periodic ICSs.
    In particular, we prove Goddyn's conjecture on periodic ICSs, which was independently proven recently by Ardanuy.
    We also prove the existence of a periodic ICS in any dimension.
\end{abstract}

\maketitle
\setcounter{tocdepth}{1}
\tableofcontents

\enlargethispage*{20pt}
\thispagestyle{empty}

\section{Introduction}

This paper is motivated by the following conjecture posted on the website \emph{Open Problem Garden} \cite{OPG_circumcenter}, proposed by Luis Goddyn:

\begin{conjecture}[Goddyn]\label{conj}
    Let $d\geq 2$ be an integer.
    Let $p=(p_i)_{i=1}^\infty$ be a sequence of points in $\mathbb{R}^d$ with the property that for every $i \geq 1$, the points $p_i,p_{i+1},\dots,p_{i+d}$ are distinct, lie on a unique sphere, and $p_{i+d+1}$ is the center of this sphere.
    If this sequence is periodic, then its period must be $2d+4$.
\end{conjecture}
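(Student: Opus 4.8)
The plan is to \emph{linearize} the circumcenter recursion by lifting it to the conformal (Minkowski) model of $\mathbb{R}^d$. I would send each point $p\in\mathbb{R}^d$ to a null vector $\mathbf p=(1,\,p,\,|p|^2)\in\mathbb{R}^{d+2}$, where $\mathbb{R}^{d+2}$ carries the Lorentzian form of signature $(d+1,1)$ for which $\langle\mathbf p,\mathbf q\rangle$ is proportional to $|p-q|^2$. In this model a sphere is a spacelike vector, incidence of a point with a sphere is orthogonality, and---this is the point---passing to the \emph{center} of the sphere through $d+1$ prescribed points is a linear operation. Using the hypothesis that $p_i,\dots,p_{i+d}$ are distinct and lie on a \emph{unique} sphere, I would first check that the lifts $\mathbf p_i,\dots,\mathbf p_{i+d}$ (together with the null vector at infinity) are linearly independent, and then show that the lifts satisfy a constant-coefficient linear recurrence of order $d+2$; equivalently, the step $\mathbf p_i\mapsto\mathbf p_{i+1}$ is implemented by one \emph{fixed} linear map $T$ on $\mathbb{R}^{d+2}$, independent of $i$, so that the whole ICS is a single orbit $\mathbf p_i=T^{\,i}\mathbf p_0$ with $T$ preserving the Lorentzian form up to scale, i.e.\ $T$ lies in the M\"obius group $O(d+1,1)$ modulo scalars. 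I would emphasize that this algebraic route seems unavoidable: the circumradius is \emph{not} monotone along an ICS (it can both increase and decrease), so no naive monovariant settles periodicity.

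With the linearization in hand, periodicity becomes a statement about the spectrum of $T$. If $p_{i+n}=p_i$ for all $i$, then $T^{\,n}$ fixes each null ray $[\mathbf p_i]$; since these rays span $\mathbb{R}^{d+2}$ by non-degeneracy, $T^{\,n}$ is a scalar, so $T$ is elliptic and all its eigenvalues are roots of unity. Equivalently $T$ is conjugate into the maximal compact subgroup $O(d+1)\times O(1)$, and the Lorentzian structure organizes its $d+2$ eigenvalues into complex-conjugate (reciprocal) pairs together with real eigenvalues $\pm1$.

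It then remains to pin down the order of $T$ exactly. The target identity is the \emph{half-period reflection}
\[
p_{i+d+2}=2G-p_i,\qquad G:=\text{centroid of one period},
\]
i.e.\ $T^{\,d+2}$ is the Euclidean point reflection through $G$; since that reflection is an involution, this forces the minimal period to be $2(d+2)=2d+4$. To establish it I would argue that the non-degeneracy hypotheses---distinct points lying on a unique sphere at every step---prevent the orbit from collapsing into a proper $T$-invariant subspace or from closing up early, which rules out every root-of-unity spectrum except the one regular elliptic packet whose $(d+2)$-th power is the central reflection. This is the delicate finish, and together with the integrability claim of the first step (that a single constant $T$ governs the recursion) it is where I expect the real work to lie: Step~1 is the main obstacle, and Step~3 the most delicate, since both hinge on extracting sharp consequences from the innocuous-looking ``unique sphere'' condition.
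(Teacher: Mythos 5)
Your target identity is the right one: the paper's central structural result (Theorem \ref{shift_vector_existence}) is precisely that a (special) ICS satisfies $p_{i+d+2}=v-rp_i$ for a fixed point $v$ and a fixed scale factor $r$, that periodicity forces $r=1$, and that the resulting half-period point reflection yields period $2d+4$ once small periods are excluded. But the route you propose does not reach it, and the failure is in Step 1, which you yourself flag as the main obstacle. The circumcenter is not a conformally natural operation: M\"obius transformations preserve spheres but not their centers, and in the lift $p\mapsto(1,p,|p|^2)$ the center of the sphere represented by a spacelike vector $\mathbf s=(a,b,c)$ is $b/a$, whose own lift is quadratic, not linear, in $\mathbf s$. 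More decisively, the claim that a single fixed $T\in O(d+1,1)$ (modulo scalars) implements the one-step shift is false, not merely unproven. Take the standard period-$8$ planar ICS with $p_1=(0,1+\sqrt{3})$, $p_2=(-1,1)$, $p_3=(1+\sqrt{3},0)$, $p_4=(1,1)$, continued with fourfold symmetry. A M\"obius map $f$ of $\widehat{\mathbb{C}}$ with $f(p_i)=p_{i+1}$ for all $i$ would satisfy $f^2=g$, where $g$ is the rotation by $\pi/2$ about the origin; then $f$ permutes the fixed-point set $\{0,\infty\}$ of $g$, so $f(z)=az$ or $f(z)=a/z$, and neither is compatible with $|f(p_1)|=|p_2|$ and $|f(p_2)|=|p_3|$ because $|p_1|=|p_3|\neq|p_2|$. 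With Step 1 gone, the spectral argument of Steps 2 and 3 has nothing to act on; and Step 3 is in any case only asserted (``rules out every root-of-unity spectrum except one'' is the whole content of the theorem, including the exclusion of period $d+2$, which the paper must handle separately by a distance-equality argument).

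What does survive, and what the paper actually does, is to linearize not the points but the scalar invariants: the ratios $a_i=|p_i-p_{i+1}|^2/4|p_{i+1}-p_{i+2}|^2$ satisfy a Lyness-type continued-fraction recurrence that is automatically $(d+2)$-periodic, so the product telescopes to give $|p_{i+d+2}-p_{i+d+3}|=r|p_i-p_{i+1}|$ with $r$ constant; an elementary perpendicularity argument (the line $p_{d+3}p_{d+4}$ is perpendicular to the bisector hyperplane of $p_1p_2$, with orientation controlled by Lemma \ref{key_lemma}) upgrades this to the vector identity $p_{i+d+2}-p_{i+d+3}=-r(p_i-p_{i+1})$ and hence to $p_{i+d+2}=v-rp_i$. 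If you want to salvage your framework, the object implemented by a fixed similarity is the $(d+2)$-step shift, not the one-step shift, and establishing that requires this metric bookkeeping rather than conformal geometry.
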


We shall refer to a sequence $p$ that satisfies the conditions in the conjecture as a $d$-dimensional \emph{iterated circumcenter sequence} (ICS).
In this paper, we completely determine the parameter space of ICSs and its subspace consisting of periodic ICSs, which in particular gives an affirmative answer to Conjecture \ref{conj}.
We note that Conjecture \ref{conj} was independently proven recently by Ardanuy \cite{Ardanuy}.

Our strategy is as follows.
We say that an ICS $p$ is \emph{special} if $|p_1-p_i|=|p_2-p_i|=\dots=|p_{i-1}-p_i|$ holds for $3\leq i\leq d+1$.
Since any ICS becomes special after some shift, we may focus on special ICSs.
For each special ICS, we define its \emph{characteristic sequence} $(a_i)_{i=1}^\infty$ by
$$
    a_i=\dfrac{|p_i-p_{i+1}|^2}{4|p_{i+1}-p_{i+2}|^2}.
$$
It is easy to see that the characteristic sequence determines a special ICS up to similarity transformations.
Therefore our tasks are the following:
\begin{enumerate}
    \item Parametrize all possible characteristic sequences.
    \item Analyze the structure of each ICS in terms of its characteristic sequence.
\end{enumerate}

As for (1), we prove that the characteristic sequence satisfies the following recurrence relation:
$$
    a_{i+d-1}=1-\dfrac{a_{i+d-2}}{1-\dfrac{a_{i+d-3}}{1-\dfrac{a_{i+d-4}}{\ddots-\dfrac{a_{i+1}}{1-a_i}}}}\quad (i\geq 1).
$$
Somewhat surprisingly, this coincides with the recurrence relation for the so-called \emph{Lyness cycles} \cite{Lyness_cross_ratio}; see also \cite{Griffiths2012}.
In particular, it follows that the characteristic sequence is always $(d+2)$-periodic.
Using this result, we can parametrize characteristic sequences by a certain open subset of $\mathbb{R}^{d-1}$ defined by polynomial inequalities:

\begin{theorem}
    For $n\geq 1$, we define a polynomial $F^{(n)}(x_1,x_2,\dots,x_n)$ by
    $$
        F^{(n)}(x_1,x_2,\dots,x_n)=\sum_{\substack{A\subset \{1,2,\dots,n\}\\ \forall i,j\in A,\:|i-j|\geq 2}} (-1)^{|A|}x_A,
    $$
    where $x_A=\prod_{i\in A}x_i$.
    Then the characteristic sequences of special $d$-dimensional ICSs are parametrized by
    $$
        U_d = \{(x_1,x_2,\dots,x_{d-1})\in \mathbb{R}^{d-1}\mid x_i>0,\;F^{(i)}(x_1,x_2,\dots,x_i)>0\;(1\leq i\leq d-1)\}.
    $$
\end{theorem}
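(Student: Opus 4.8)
The plan is to exhibit an explicit bijection between the characteristic sequences of special $d$-dimensional ICSs and the set $U_d$, by sending a sequence $(a_i)_{i=1}^\infty$ to its first $d-1$ entries $(a_1,\dots,a_{d-1})$. This map is injective because, by the recurrence relation recalled above, every characteristic sequence is determined by its first $d-1$ terms. Hence everything reduces to identifying the image: I must show that $(a_1,\dots,a_{d-1})\in U_d$ is both necessary and sufficient for the tuple to extend to the characteristic sequence of a genuine special ICS.

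The first ingredient is an algebraic reinterpretation of the continued fraction. Splitting the independent-set sum according to whether $n\in A$ gives the continuant recurrence
$$F^{(n)}(x_1,\dots,x_n)=F^{(n-1)}(x_1,\dots,x_{n-1})-x_n F^{(n-2)}(x_1,\dots,x_{n-2}),\qquad F^{(0)}=F^{(-1)}=1.$$
A short induction then identifies the continued fraction appearing in the recurrence for the characteristic sequence as a ratio of these continuants:
$$1-\cfrac{x_k}{1-\cfrac{x_{k-1}}{\ddots-\cfrac{x_2}{1-x_1}}}=\frac{F^{(k)}(x_1,\dots,x_k)}{F^{(k-1)}(x_1,\dots,x_{k-1})}.$$
In particular, as soon as all the $F^{(j)}$ are positive, every truncation of the continued fraction is well-defined and positive; this is the algebraic shadow of the geometric positivity explained next.

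The heart of the argument is a geometric computation identifying the $F^{(j)}$ with squared heights. Writing $s_{i+1}=|p_i-p_{i+1}|^2$ and $\rho_i^2$ for the squared circumradius of $p_1,\dots,p_i$ (so $\rho_1=0$), the special condition forces $p_{i+1}$ to lie on the axis through the circumcenter of $p_1,\dots,p_i$ orthogonal to their affine span, and an elementary computation yields the circumradius recurrence
$$\rho_{i+1}^2=\frac{s_{i+1}^2}{4\,(s_{i+1}-\rho_i^2)}.$$
Setting $\beta_i=\rho_i^2/s_{i+1}$ and using $a_i=s_{i+1}/(4s_{i+2})$, this becomes $\beta_{i+1}=a_i/(1-\beta_i)$, so by the continuant identity $1-\beta_k=F^{(k-1)}(a_1,\dots,a_{k-1})/F^{(k-2)}(a_1,\dots,a_{k-2})$. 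Since the squared height of $p_{k+1}$ above the span of $p_1,\dots,p_k$ is $h_{k+1}^2=s_{k+1}(1-\beta_k)$, I obtain
$$h_{k+1}^2=s_{k+1}\,\frac{F^{(k-1)}(a_1,\dots,a_{k-1})}{F^{(k-2)}(a_1,\dots,a_{k-2})}.$$
As the $s_{i+1}$ are positive squared distances, the condition $a_i>0$ is automatic, and an induction starting from $F^{(0)}=1$ shows that positivity of all the heights $h_3,\dots,h_{d+1}$ is equivalent to $F^{(j)}(a_1,\dots,a_j)>0$ for $1\le j\le d-1$. This at once proves necessity: an actual special ICS has $p_1,\dots,p_{d+1}$ lying on a unique sphere, hence forming a nondegenerate $d$-simplex with positive heights, so $(a_1,\dots,a_{d-1})\in U_d$.

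For sufficiency I reverse the computation: given $(a_1,\dots,a_{d-1})\in U_d$, I normalize $s_2=1$, define $s_{i+2}=s_{i+1}/(4a_i)$ and the $\rho_i^2$ by the recurrence above, and build $p_1,\dots,p_{d+1}$ by placing each $p_{k+1}$ at height $h_{k+1}=\sqrt{s_{k+1}-\rho_k^2}>0$ on the orthogonal axis — possible precisely because the $F^{(j)}$ are positive. This produces a nondegenerate special simplex; taking $p_{d+2}$ to be its circumcenter and continuing by circumcenters gives a special ICS whose first $d-1$ characteristic values are the prescribed $a_i$, and whose characteristic sequence therefore agrees with the periodic extension forced by the recurrence. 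The main obstacle is the final bookkeeping: one must verify that the construction never degenerates at later windows, so that each successive circumcenter is genuinely defined. Here I will invoke that the characteristic sequence is $(d+2)$-periodic and determines the special ICS up to similarity, so that all windows $p_m,\dots,p_{m+d+1}$ are mutually similar; nondegeneracy of the first window then propagates to all of them, completing the identification of the image with $U_d$.
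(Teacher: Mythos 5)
Your treatment of the first window is essentially the paper's: the continuant recurrence for $F^{(n)}$, the identification of the continued fraction with $F^{(k)}/F^{(k-1)}$, and the circumradius/height recurrences reproduce Lemma \ref{Kanda_function_properties} and Lemma \ref{key_lemma} (1)--(2), and they correctly show both that $(a_1,\dots,a_{d-1})\in U_d$ is necessary and that a simplex $p_1,\dots,p_{d+1}$ in good position realizing any prescribed point of $U_d$ can be built. The genuine gap is in the step you yourself flag as ``the main obstacle'': showing that this simplex extends to an \emph{infinite} ICS, i.e.\ that $p_2,\dots,p_{d+2}$ (with $p_{d+2}$ the circumcenter) are again in general position, and so on forever. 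Your proposed fix --- invoking the $(d+2)$-periodicity of the characteristic sequence and mutual similarity of all windows --- is circular: periodicity of the characteristic sequence and the fact that it determines the ICS up to similarity are properties of an ICS that is already known to exist, so they cannot certify that the next circumcenter step is nondegenerate. Moreover, consecutive windows are \emph{not} mutually similar in general: the window $p_m,\dots,p_{m+d+1}$ has shape parameters $(a_m,\dots,a_{m+d-2})$, a shifted and generically different point of the parameter space, so nondegeneracy of the first window does not propagate to the second by similarity; only windows shifted by $d+2$ are comparable this way, and even reaching the window starting at $p_{d+3}$ already presupposes that all the intermediate circumcenters are defined. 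Without this step the surjectivity half of the parametrization is not proved.

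The paper closes exactly this gap with a separate geometric induction, Lemma \ref{key_lemma} (3): the segment $p_1Q_n$ is disjoint from $H(p_2,\dots,p_n)$, whence $p_2,\dots,p_n,Q_n$ are again in good position; iterating gives Corollary \ref{extension_lemma}. Alternatively, your argument can be repaired algebraically: the pairwise distances of $p_2,\dots,p_{d+2}$ are forced to follow the good-position pattern with parameters $(a_2,\dots,a_d)$, so nondegeneracy of that window is equivalent to $F^{(i)}(a_2,\dots,a_{i+1})>0$ for $1\le i\le d-1$. The cases $i\le d-2$ hold because $p_2,\dots,p_{d+1}$ are affinely independent as a subset of an affinely independent set, and the case $i=d-1$ follows from $F^{(d)}(a_1,\dots,a_d)=0$ together with the identity
$$
F^{(d-1)}(x_1,\dots,x_{d-1})F^{(d-1)}(x_2,\dots,x_d)-F^{(d)}(x_1,\dots,x_d)F^{(d-2)}(x_2,\dots,x_{d-1})=x_1x_2\cdots x_d,
$$
which gives $F^{(d-1)}(a_2,\dots,a_d)=a_1\cdots a_d/F^{(d-1)}(a_1,\dots,a_{d-1})>0$. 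Either way, an additional argument of this kind is required to complete the proof.
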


As for (2), we prove the following theorem, which describes the structure of a general ICS:
\begin{theorem}
    Let $p$ be a special $d$-dimensional ICS with characteristic sequence $(a_i)_{i=1}^\infty$ and set
    $$r=\dfrac{1}{2^{d+2}\sqrt{a_1a_2\cdots a_{d+2}}}.$$
    Then there exists a point $v \in \mathbb{R}^d$ such that $p_{i+d+2} =  v-r p_i$ holds for $i\geq 1$.
    In particular, $p$ is periodic if and only if $r=1$.
\end{theorem}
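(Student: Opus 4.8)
The plan is to reduce the statement to a single vector identity for the consecutive difference vectors $q_i := p_{i+1}-p_i$, namely
$$ q_{i+d+2} = -r\, q_i \qquad (i \ge 1). $$
Granting this, the theorem follows by telescoping: setting $w_i := p_{i+d+2} + r\,p_i$ one computes $w_{i+1}-w_i = q_{i+d+2} + r\,q_i = 0$, so $w_i$ is a constant vector $v$ and $p_{i+d+2} = v - r\,p_i$. Writing $c := v/(1+r)$ for the fixed point of $x \mapsto v - rx$, this gives $p_{i+k(d+2)} - c = (-r)^k (p_i - c)$; hence the sequence returns to its start exactly when $r=1$ (and then with minimal period $2d+4$, because the sign is $-1$), while for $r \ne 1$ the points spiral in or out and $p$ cannot be periodic. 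Thus everything comes down to the displayed identity.

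The magnitude $|q_{i+d+2}| = r\,|q_i|$ is immediate from the definition of the characteristic sequence: $a_i = |q_i|^2/(4|q_{i+1}|^2)$ gives $|q_{i+1}| = |q_i|/(2\sqrt{a_i})$, and telescoping over one full period, together with the $(d+2)$-periodicity of $(a_i)$ established earlier, yields $|q_{i+d+2}| = |q_i|/(2^{d+2}\sqrt{a_1\cdots a_{d+2}}) = r\,|q_i|$. It therefore remains to identify the direction of $q_{i+d+2}$, i.e.\ to show it is a negative multiple of $q_i$.

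For the direction I would extract an orthogonality structure from consecutive circumcenter conditions. Writing the condition that $p_{i+d+1}$ is the circumcenter of $p_i,\dots,p_{i+d}$ as the perpendicular-bisector equations $\langle q_j,\,2p_{i+d+1}-p_j-p_{j+1}\rangle = 0$ and subtracting the analogous equations for the next circumcenter $p_{i+d+2}$, the shared indices cancel all but one term and give the key relation $\langle q_{k+d},\,q_j\rangle = 0$ for $j = k,\dots,k+d-2$; that is, each edge vector is orthogonal to the $d-1$ edge vectors immediately preceding it, skipping one. The \emph{special} hypothesis is precisely what makes this relation valid down to the initial index, since it encodes the missing perpendicular-bisector equations for $p_{d+1}$ and, more generally, the equidistance of every early point from all of its predecessors. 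Combining these orthogonalities across indices shows that $q_m$ is orthogonal to $q_{m+2},\dots,q_{m+d}$, while the basic relation applied at $k=m+2$ shows that $q_{m+d+2}$ is orthogonal to the same $d-1$ vectors. Because the points lie on a genuine sphere, any $d$ consecutive edge vectors are linearly independent, so $\mathrm{span}(q_{m+2},\dots,q_{m+d})$ is a hyperplane; both $q_m$ and $q_{m+d+2}$ lie in its one-dimensional orthogonal complement, whence $q_{m+d+2} \parallel q_m$.

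Finally I must fix the sign. Magnitude and parallelism give $q_{m+d+2} = \epsilon_m\, r\,q_m$ with $\epsilon_m \in \{-1,+1\}$, and a short argument shows $\epsilon_m$ is independent of $m$: both $(q_m)$ and $(q_{m+d+2}) = (\epsilon_m r\,q_m)$ satisfy the same order-$d$ linear recurrence (the circumcenter lies in the affine hull, giving $q_{i+d} = \sum_{k} \alpha_k^{(i)} q_{i+k}$ with $(d+2)$-periodic coefficients), and comparing the two forces $\epsilon_m$ to be constant wherever the coefficients are nonzero, hence globally. The remaining task—showing this constant equals $-1$ rather than $+1$—is the main obstacle, and is exactly the distinction between minimal period $2d+4$ and $d+2$. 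I expect to settle it either by a direct orientation computation, tracking $\det(q_m,\dots,q_{m+d-1})$ through one period via the companion matrices of the recurrence, or, since $\epsilon$ is a locally constant function of the parameters, by evaluating it at one explicit reference configuration in the (connected) parameter space $U_d$ where the period is manifestly $2d+4$. Pinning down this sign cleanly, and verifying the genericity used above, are the points that will require the most care.
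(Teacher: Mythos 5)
Your reduction to the single identity $q_{i+d+2}=-r\,q_i$ for $q_i=p_{i+1}-p_i$, the telescoping construction of $v$, and the magnitude computation $|q_{i+d+2}|=r|q_i|$ via the $(d+2)$-periodicity of $(a_i)$ all match the paper (its Lemma \ref{scale_factor_existence} and the proof of Theorem \ref{shift_vector_existence}). Your parallelism argument is also sound and is essentially the paper's Lemma \ref{parallel_lemma} rewritten in inner-product language: the perpendicular-bisector relations you extract say exactly that $q_m$ and $q_{m+d+2}$ both lie in the orthogonal complement of the hyperplane spanned by $q_{m+2},\dots,q_{m+d}$, which is the paper's observation that both $p_1p_2$ and $p_{d+3}p_{d+4}$ are perpendicular to $H(p_3,\dots,p_{d+2})$. (You do need the remark, which you make, that speciality supplies the missing orthogonalities $q_i\perp q_j$ for $|i-j|\geq 2$ among the first $d+1$ points; and linear independence of $d-1$ consecutive edge vectors follows from general position.)

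The genuine gap is the sign, and you have identified it yourself without closing it. Neither of your two proposed routes is carried out: the determinant/orientation computation is not even set up, and the deformation argument requires (i) proving that $U_d$ is connected, (ii) exhibiting, in every dimension $d$, an explicit special ICS for which $q_{d+3}=-r\,q_1$ can be verified by hand, and (iii) justifying that $\epsilon$ is a well-defined locally constant function on $U_d$ (which needs uniqueness of the configuration up to similarity and a continuous local section); none of these is done. Your auxiliary claim that $\epsilon_m$ is independent of $m$ also leans on the nonvanishing of the barycentric coefficients $\alpha_k^{(i)}$, which is an unverified genericity hypothesis. Since the minus sign is precisely what separates period $2d+4$ from period $d+2$, i.e.\ the entire content of Goddyn's conjecture, the theorem is not established as written. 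For comparison, the paper pins down the sign with a short convexity argument proved once and for all by induction (Lemma \ref{key_lemma}~(3)): the segment from $p_2$ to the circumcenter $p_{d+3}$ of $p_2,\dots,p_{d+2}$ is disjoint from $H(p_3,\dots,p_{d+2})$, so $p_2$ and $p_{d+3}$ lie on the same side of that hyperplane, while $p_1$ lies on the opposite side and $p_{d+4}$ lies on the half-line from $p_{d+3}$ toward the hyperplane (Lemma \ref{key_lemma}~(1)); hence $p_1-p_2$ and $p_{d+4}-p_{d+3}$ point the same way and the sign is $-1$. Incorporating that separation lemma (or an equivalent) is what your argument still needs.
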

Using this general result, Conjecture \ref{conj} can be proven by a simple argument.
We also show that $r$ can take any value in $[\cos^{d+2}(\pi/(d+2)),\infty)$, which in particular implies that a periodic ICS exists in any dimension.

\subsection*{Acknowledgement}
A large part of this paper is based on discussions at ``Mathspace Topos''.
The authors would like to express their deep gratitude to Nobuo Kawakami, the promoter of ``Mathspace Topos'', Fumiharu Kato, the adviser, and Toshihiko Nakazawa, the organizer.
The authors would like to thank Ryuya Hora for pointing out that Lemma \ref{key_lemma} (3) can be proved by induction.
The authors would also like to thank Masaki Natori and Koto Imai for their contributions to the visualization of ICSs.

\section{Characteristic sequences}

Throughout the paper, we assume that $d\geq 2$ is an integer.
For $p_1,p_2,\dots,p_n\in \mathbb{R}^d$, we write $H(p_1,p_2,\dots,p_n)$ for the affine subspace spanned by $p_1,p_2,\dots,p_n$.
We say that $p_1,p_2,\dots,p_n$ are \emph{in general position} if $H(p_1,p_2,\dots,p_n)$ has dimension $n-1$.
In this case there is a unique $(n-2)$-dimensional sphere in $H(p_1,p_2,\dots,p_n)$ containing $p_1,p_2,\dots,p_n$.
We write $S(p_1,p_2,\dots,p_n)$ for this sphere.

\begin{definition}
    We say that $p_1,p_2,\dots,p_n\in \mathbb{R}^d$ are \emph{in good position} if they are in general position and satisfy $|p_1-p_i|=|p_2-p_i|=\dots=|p_{i-1}-p_i|$ for $3\leq i\leq n$.
\end{definition}

\begin{figure}[h]
\begin{tikzpicture}[scale=0.8]
\coordinate (P1) at (0.3,0);
\coordinate (P2) at (3,0);
\coordinate (P3) at (4,1.5);
\coordinate (P4) at (2,4);

\draw (P1) -- (P2) -- (P3);
\draw [dashed] (P1) -- (P3);
\draw (P1) -- (P4);
\draw (P2) -- (P4);
\draw (P3) -- (P4);

\draw[dashed] (P1) -- (P3);

\path (P2) -- node[sloped, midway] {\tiny $||$} (P3);
\path (P1) -- node[sloped, midway] {\tiny $||$} (P3);

\path (P1) -- node[sloped, midway] {\tiny $|$} (P4);
\path (P2) -- node[sloped, midway] {\tiny $|$} (P4);
\path (P3) -- node[sloped, midway] {\tiny $|$} (P4);

\node[below left] at (P1) {$p_1$};
\node[below right] at (P2) {$p_2$};
\node[above right] at (P3) {$p_3$};
\node[above] at (P4) {$p_4$};

\end{tikzpicture}
\caption{Four points $p_1,p_2,p_3,p_4$ in good position}
\end{figure}
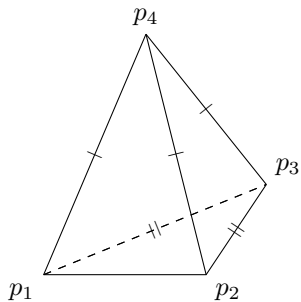

\begin{definition}
    A \emph{$d$-dimensional iterated circumcenter sequence} (ICS) is an infinite sequence $p=(p_i)_{i=1}^\infty$ of points in $\mathbb{R}^d$ such that for any $i\geq 1$, the points $p_i,p_{i+1},\dots,p_{i+d}$ are in general position, and $p_{i+d+1}$ is the center of $S(p_i,p_{i+1},\dots,p_{i+d})$.
    
    We say that $p$ is \emph{special} if $p_1,p_2,\dots,p_{d+1}$ are in good position.
    In this case, $p_i,p_{i+1},\dots,p_{i+d}$ are in good position for all $i\geq 1$.
    We say that $p$ is \emph{periodic} if there is an integer $m>0$ such that $p_i=p_{i+m}$ holds for $i\geq 1$.
    The smallest value of such $m$ is called the \emph{period} of $p$.
\end{definition}

\begin{remark}\label{remark_shift}
    If $p$ is a $d$-dimensional ICS, then the shifted sequence $(p_{i+d-1})_{i=1}^\infty$ is a special $d$-dimensional ICS.
    In particular, a periodic ICS is special.
\end{remark}

The following figure illustrates an example of a periodic $2$-dimensional ICS with period $8$.

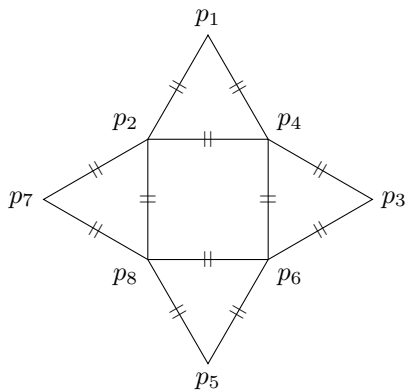
\begin{figure}[h]
\begin{tikzpicture}[scale = 0.8]
\def\sidelength{2}
\coordinate (P1) at (0, {(1+sqrt(3))/2*\sidelength});
\coordinate (P2) at (-{\sidelength/2}, {\sidelength/2});
\coordinate (P3) at ({(1+sqrt(3))/2*\sidelength}, 0);
\coordinate (P4) at ({\sidelength/2}, {\sidelength/2});
\coordinate (P5) at (0, -{(1+sqrt(3))/2*\sidelength});
\coordinate (P6) at ({\sidelength/2}, -{\sidelength/2});
\coordinate (P7) at (-{(1+sqrt(3))/2*\sidelength}, 0);
\coordinate (P8) at (-{\sidelength/2}, -{\sidelength/2});
\draw (P2) -- node[sloped, midway] {\tiny $||$}
(P4) -- node[sloped, midway] {\tiny $||$}
(P6) -- node[sloped, midway] {\tiny $||$}
(P8) -- node[sloped, midway] {\tiny $||$}
cycle;
\draw (P1) -- node[sloped, midway] {\tiny $||$}
(P4) -- node[sloped, midway] {\tiny $||$}
(P3) -- node[sloped, midway] {\tiny $||$}
(P6) -- node[sloped, midway] {\tiny $||$}
(P5) -- node[sloped, midway] {\tiny $||$}
(P8) -- node[sloped, midway] {\tiny $||$}
(P7) -- node[sloped, midway] {\tiny $||$}
(P2) -- node[sloped, midway] {\tiny $||$}
cycle;
\node at (P1) [above] {$p_1$};
\node at (P2) [above left] {$p_2$};
\node at (P3) [right] {$p_3$};
\node at (P4) [above right] {$p_4$};
\node at (P5) [below] {$p_5$};
\node at (P6) [below right] {$p_6$};
\node at (P7) [left] {$p_7$};
\node at (P8) [below left] {$p_8$};
\end{tikzpicture}
\caption{A periodic $2$-dimensional ICS with period $8$}
\end{figure}

\begin{lemma}\label{key_lemma}
    Let $n\geq 2$ and suppose that $p_1,p_2,\dots,p_n\in \mathbb{R}^d$ are in good position.
    For $1\leq i\leq n$, we write $Q_i$ (resp. $R_i$) be the center (resp. radius) of $S(p_1,p_2,\dots,p_i)$.
    \begin{enumerate}
        \item   The half-line $p_nQ_{n-1}$ is perpendicular to $H(p_1,p_2,\dots,p_{n-1})$, and $Q_n$ lies on this half-line.
        \item   If we set $b_i=|p_i-p_{i+1}|$ and $a_i=b_i^2/4b_{i+1}^2$, then we have
                $$
                R_n^2 =
                    \dfrac{b_{n-1}^2/4}{
                    1-\dfrac{a_{n-2}}{
                        1-\dfrac{a_{n-3}}{
                            \ddots -\dfrac{a_2}{1-a_1}
                        }
                    }
                }.
                $$
        \item   The segment $p_1Q_n$ is disjoint from $H(p_2,p_3,\dots,p_n)$.
                In particular, $p_2,p_3,\dots,p_n,Q_n$ are in good position.
    \end{enumerate}
\end{lemma}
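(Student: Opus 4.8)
The plan is to prove the three assertions in order, deriving (1) by a direct orthogonality computation, then feeding (1) into both (2) and (3). The genuinely delicate part is (3), which I will handle by the inductive argument mentioned in the acknowledgement.

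\textbf{Part (1).} Since $p_1,\dots,p_n$ are in good position, $p_n$ is equidistant from $p_1,\dots,p_{n-1}$, hence lies on the affine subspace $L$ through $Q_{n-1}$ orthogonal to $H(p_1,\dots,p_{n-1})$; this is exactly the statement that $p_nQ_{n-1}$ is perpendicular to $H(p_1,\dots,p_{n-1})$. The circumcenter $Q_n$ is likewise equidistant from $p_1,\dots,p_{n-1}$ and lies in $W:=H(p_1,\dots,p_n)$. I would observe that $L\cap W$ is cut out inside $W$ by the $n-2$ perpendicular bisectors of $p_1p_2,\dots,p_1p_{n-1}$, which are in general position, so $L\cap W$ is a line; it contains $Q_{n-1}$ and $p_n$, hence equals the line $p_nQ_{n-1}$, and therefore $Q_n$ lies on it. Writing $Q_n=Q_{n-1}+su$ for the unit vector $u$ in the direction $p_n-Q_{n-1}$ (here $h:=|p_n-Q_{n-1}|>0$ because $p_n\notin H(p_1,\dots,p_{n-1})$), the single equation $|Q_n-p_{n-1}|=|Q_n-p_n|$ gives $s=(h^2-R_{n-1}^2)/(2h)$. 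Re-expressing $Q_n=p_n+\mu(Q_{n-1}-p_n)$ yields $\mu=(h^2+R_{n-1}^2)/(2h^2)>0$, so $Q_n$ lies on the half-line from $p_n$ through $Q_{n-1}$.

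\textbf{Part (2).} From the decomposition $p_n=Q_{n-1}+hu$ with $u\perp H(p_1,\dots,p_{n-1})$ and $|p_{n-1}-Q_{n-1}|=R_{n-1}$, the Pythagorean theorem gives $b_{n-1}^2=R_{n-1}^2+h^2$, while the value of $s$ from (1) gives $R_n^2=(s-h)^2=(h^2+R_{n-1}^2)^2/(4h^2)$. Eliminating $h$ via $h^2=b_{n-1}^2-R_{n-1}^2$ turns this into the recursion
\[
R_n^2=\frac{b_{n-1}^2/4}{\,1-R_{n-1}^2/b_{n-1}^2\,},\qquad \frac{R_i^2}{b_i^2}=\frac{a_{i-1}}{\,1-R_{i-1}^2/b_{i-1}^2\,}\ (i\ge 3),\qquad \frac{R_2^2}{b_2^2}=a_1,
\]
where the base case uses that $S(p_1,p_2)$ has center the midpoint and radius $b_1/2$. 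A straightforward downward induction then unfolds these identities into the stated continued fraction (whose innermost term $a_2/(1-a_1)$ is exactly $R_3^2/b_3^2$). All denominators are positive since $1-R_{i-1}^2/b_{i-1}^2=h^2/b_{i-1}^2>0$, so the expression is well defined.

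\textbf{Part (3).} I will prove by induction on $n$ that the segment $p_1Q_n$ is disjoint from $V:=H(p_2,\dots,p_n)$. Since $V$ is a hyperplane of $W$ and $p_1\notin V$ by general position, choose an affine functional $\phi$ on $W$ vanishing on $V$ with $\phi(p_1)>0$. As $\phi$ is affine along the segment, disjointness is equivalent to $\phi(Q_n)>0$ (if $\phi(Q_n)\le 0$ the segment meets $\{\phi=0\}=V$). By (1), $Q_n=p_n+\mu(Q_{n-1}-p_n)$ with $\mu>0$, and $p_n\in V$ forces $\phi(p_n)=0$, so $\phi(Q_n)=\mu\,\phi(Q_{n-1})$ and it suffices to show $\phi(Q_{n-1})>0$. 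Restricting $\phi$ to $H(p_1,\dots,p_{n-1})$, it vanishes on $H(p_2,\dots,p_{n-1})=V\cap H(p_1,\dots,p_{n-1})$ and is positive at $p_1$; by the inductive hypothesis applied to the (inherited good-position) configuration $p_1,\dots,p_{n-1}$, the segment $p_1Q_{n-1}$ avoids $H(p_2,\dots,p_{n-1})$, so $p_1$ and $Q_{n-1}$ lie strictly on the same side, giving $\phi(Q_{n-1})>0$. The base case $n=2$ is immediate, as $Q_2$ is the midpoint of $p_1p_2$ and the segment $p_1Q_2$ avoids the point $p_2$. Finally $\phi(Q_n)>0$ gives $Q_n\notin V$, so $p_2,\dots,p_n,Q_n$ are in general position; the equidistance relations defining good position hold because $Q_n$ is the circumcenter of $p_1,\dots,p_n$ (hence of $p_2,\dots,p_n$) and good position is inherited by $p_2,\dots,p_n$. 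The main obstacle is organizing this third step so that the strict same-side property propagates cleanly; reducing everything to the sign of a single affine functional $\phi$ and exploiting $p_n\in V$ is what makes the induction go through.
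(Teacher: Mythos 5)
Your proof is correct and follows essentially the same route as the paper's: perpendicularity of $p_nQ_{n-1}$ to $H(p_1,\dots,p_{n-1})$ via equidistance, the Pythagorean recursion $R_i^2/b_i^2=a_{i-1}/(1-R_{i-1}^2/b_{i-1}^2)$ for (2), and induction on $n$ for (3) using that $Q_n$ lies on the ray from $p_n$ through $Q_{n-1}$. The only (cosmetic) difference is that you make the half-line claim in (1) and the same-side propagation in (3) explicit via the coefficient $\mu=(h^2+R_{n-1}^2)/(2h^2)>0$ and an affine functional, where the paper argues from a figure and a segment-concatenation statement.
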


\begin{proof}
    Let $H=H(p_1,p_2,\dots,p_{n-1})$.
    First we show that the line $p_nQ_{n-1}$ is perpendicular to $H$.
    Suppose that $1\leq i<j\leq n-1$.
    Since $p_1,p_2,\dots,p_n$ are in good position, we have
    $|p_i-p_n|=|p_j-p_n|$.
    On the other hand, we have
    $|p_i-Q_{n-1}|=|p_j-Q_{n-1}|$
    by definition of $Q_{n-1}$.
    Therefore the line $p_nQ_{n-1}$ is perpendicular to the line $p_ip_j$, which proves the claim.
    The same argument shows that the line $Q_{n-1}Q_n$ is perpendicular to $H$, so the points $p_n,Q_{n-1},Q_n$ are collinear.
    The following figure illustrates two possibilities for the configurations of the four points  $p_{n-1},p_n,Q_{n-1},Q_n$.
    In either case, $|p_{n-1}-Q_n|=|p_n-Q_n|=R_n$ shows that $Q_n$ lies on the half-line $p_nQ_{n-1}$.
    This completes the proof of (1).

    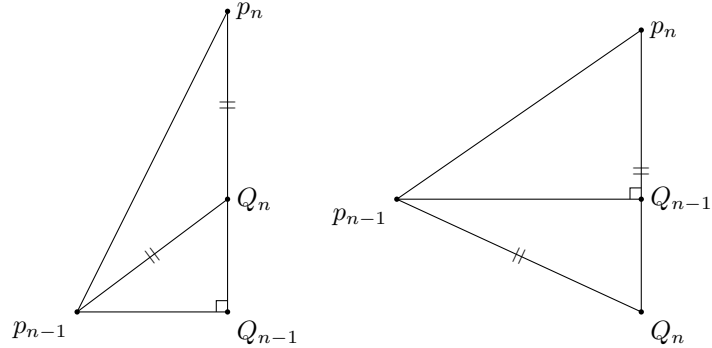
\begin{figure}[h]
    \begin{tikzpicture}[scale = 0.5]
    \coordinate (A) at (0,0);
    \coordinate (B) at (4,0);
    \coordinate (C) at (4,3);
    \coordinate (D) at (4,8);

    \draw (A) -- (B);
    \draw (A) -- (C) node[sloped, midway] {\tiny $||$};
    \draw (A) -- (D);
    \draw (B) -- (C);
    \draw (C) -- (D) node[sloped, midway] {\tiny $||$};
    \draw (3.7,0) -- (3.7,0.3) -- (4,0.3);

    \foreach \point in {A, B, C, D}
     \fill (\point) circle (2pt);

    \node at (A) [below left] {$p_{n-1}$};
    \node at (B) [below right] {$Q_{n-1}$};
    \node at (C) [right] {$Q_n$};
    \node at (D) [right] {$p_n$};
    

    \coordinate (A) at (8.5,3);
    \coordinate (B) at (15,3);
    \coordinate (C) at (15,0);
    \coordinate (D) at (15,7.5);

    \draw (A) -- (B);
    \draw (A) -- (C) node[sloped, midway] {\tiny $||$};
    \draw (A) -- (D);
    \draw (C) -- (D) node[sloped, midway] {\tiny $||$};
    \draw (14.7,3) -- (14.7,3.3) -- (15,3.3);

    \foreach \point in {A, B, C, D}
     \fill (\point) circle (2pt);

    \node at (A) [below left] {$p_{n-1}$};
    \node at (B) [right] {$Q_{n-1}$};
    \node at (C) [below right] {$Q_n$};
    \node at (D) [right] {$p_n$};
    \end{tikzpicture}
    \caption{Two possible configurations of points $p_{n-1},p_n,Q_{n-1},Q_n$}
    \end{figure}

    Next we prove (2).
    By the above figure and the Pythagorean theorem, we get
    $$
        R_n^2 = \dfrac{b_{n-1}^2/4}{1-\biggl(\dfrac{R_{n-1}}{b_{n-1}}\biggr)^2}.
    $$
    Applying this to the first $i$ points $p_1,p_2,\dots,p_i$, we also get
    $$
        \biggl(\dfrac{R_i}{b_i}\biggr)^2 = \dfrac{a_{i-1}}{1-\biggl(\dfrac{R_{i-1}}{b_{i-1}}\biggr)^2}\quad (2\leq i\leq n-1).
    $$
    The equality (2) follows from these formulas.

    We prove (3) by induction on $n$.
    For $n=2$, the claim is obvious.
    Suppose that $n\geq 3$.
    Applying the induction hypothesis to the points $p_1,p_2,\dots,p_{n-1}\in H(p_1,p_2,\dots,p_{n-1})$, we see that the segment $p_1Q_{n-1}$ is disjoint from $H(p_2,p_3,\dots,p_{n-1})$.
    Noting that $p_n\not\in H(p_1,p_2,\dots,p_{n-1})$, this implies that the segment $p_1Q_{n-1}$ is disjoint from $H(p_2,p_3,\dots,p_n)$.
    On the other hand, since $Q_n$ lies on the half-line $p_nQ_{n-1}$ by (1), the segment $Q_{n-1}Q_n$ is also disjoint from $H(p_2,p_3,\dots,p_n)$.
    Combining these two facts, we conclude that the segment $p_1Q_n$ is disjoint from $H(p_2,p_3,\dots,p_n)$.
\end{proof}

\begin{corollary}\label{extension_lemma}
    Suppose that $p_1,p_2,\dots,p_{d+1}\in \mathbb{R}^d$ are in good position.
    Then we can take $p_{d+2},p_{d+3},\dots\in \mathbb{R}^d$ so that $(p_i)_{i=1}^\infty$ is a special $d$-dimensional ICS.
\end{corollary}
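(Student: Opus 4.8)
The plan is to construct the points $p_{d+2},p_{d+3},\dots$ one at a time by repeatedly taking circumcenters, and to verify by induction that each fresh window of $d+1$ consecutive points remains in good position, so that the recursion never breaks down. The key observation is that the conclusion of Lemma \ref{key_lemma}(3) is tailor-made for exactly this induction.

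First I would set up the induction. The statement to prove is that for every $i\geq 1$ the points $p_i,p_{i+1},\dots,p_{i+d}$ are in good position, and that $p_{i+d+1}$ is defined as the center of $S(p_i,p_{i+1},\dots,p_{i+d})$. The base case $i=1$ is precisely the hypothesis of the corollary. Since being in good position entails being in general position, the sphere $S(p_i,\dots,p_{i+d})$ is the unique $(d-1)$-dimensional sphere through these $d+1$ points, and its center is a well-defined point of $\mathbb{R}^d$; this is the point we take as $p_{i+d+1}$, so the definition of the sequence is legitimate at every stage.

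For the inductive step, suppose $p_i,\dots,p_{i+d}$ are in good position. These are $d+1$ points, so I would apply Lemma \ref{key_lemma}(3) with $n=d+1$, after relabeling $p_i,\dots,p_{i+d}$ as $p_1,\dots,p_n$. The lemma then yields that $p_{i+1},p_{i+2},\dots,p_{i+d},p_{i+d+1}$ are in good position, where $p_{i+d+1}=Q_n$ is exactly the circumcenter we just used to define it. But these are precisely the points $p_{i+1},\dots,p_{(i+1)+d}$, so the inductive hypothesis holds with $i$ replaced by $i+1$, completing the induction.

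Finally I would observe that the sequence constructed in this way satisfies the definition of a special $d$-dimensional ICS: each window $p_i,\dots,p_{i+d}$ is in good position, hence in general position, and $p_{i+d+1}$ is by construction the center of $S(p_i,\dots,p_{i+d})$, while $p_1,\dots,p_{d+1}$ are in good position by hypothesis. I do not expect any genuine obstacle here; the one point deserving care is that Lemma \ref{key_lemma}(3) returns a \emph{full} window of $d+1$ points in good position (and not fewer), which is exactly what keeps the recursion self-sustaining and confined to $\mathbb{R}^d$.
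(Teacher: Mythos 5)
Your proposal is correct and follows essentially the same route as the paper: define each new point as the circumcenter of the preceding $d+1$ points and invoke Lemma \ref{key_lemma}(3) to see that the new window of $d+1$ points is again in good position, so the process can be iterated. You simply spell out the induction more explicitly than the paper does.
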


\begin{proof}
    We define $p_{d+2}$ to be the center of $S(p_1,p_2,\dots,p_{d+1})$.
    Then Lemma \ref{key_lemma} (3) shows that $p_2,p_3,\dots,p_{d+2}$ are also in good position.
    Repeating this process, we get the desired sequence.
\end{proof}

\begin{definition}
    Let $p$ be a $d$-dimensional ICS.
    We define the \emph{characteristic sequence} of $p$ to be the sequence of positive real numbers $(a_i)_{i=1}^\infty$ defined by
    $$
        a_i = \dfrac{|p_i-p_{i+1}|^2}{4|p_{i+1}-p_{i+2}|^2}.
    $$
\end{definition}

\begin{lemma}\label{recurrence}
    Let $p$ be a special $d$-dimensional ICS.
    Then the characteristic sequence $(a_i)_{i=1}^\infty$ satisfies
    \begin{align}
        \label{continued_fraction_ineq}
        a_i&<1-\dfrac{a_{i-1}}{1-\dfrac{a_{i-2}}{1-\dfrac{a_{i-3}}{\ddots-\dfrac{a_{2}}{1-a_1}}}}\quad (1\leq i\leq d-1),\\
        \label{continued_fraction}
        a_{i+d-1}&=1-\dfrac{a_{i+d-2}}{1-\dfrac{a_{i+d-3}}{1-\dfrac{a_{i+d-4}}{\ddots-\dfrac{a_{i+1}}{1-a_i}}}}\quad (i\geq 1).
    \end{align}
    Conversely, for any sequence of positive real numbers $(a_1,a_2,\dots,a_{d-1})$ satisfying (\ref{continued_fraction_ineq}), there is a special $d$-dimensional ICS such that the first $d-1$ terms of the characteristic sequence is given by $(a_1,a_2,\dots,a_{d-1})$.
\end{lemma}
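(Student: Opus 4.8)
The plan is to recast both assertions in terms of the edge lengths $b_i = |p_i-p_{i+1}|$ and the normalized squared radii $\rho_i = (R_i/b_i)^2$, where $R_i$ is the radius of $S(p_1,\dots,p_i)$ (and $\rho_1 = 0$, since $S(p_1)$ is a point). By Lemma~\ref{key_lemma}(2) these quantities satisfy $\rho_i = a_{i-1}/(1-\rho_{i-1})$ for $2\le i\le d$, and unwinding this recurrence from the base $\rho_1=0$ shows that $1-\rho_i$ is exactly the finite continued fraction appearing on the right-hand side of \eqref{continued_fraction_ineq}. With this dictionary, \eqref{continued_fraction_ineq} for index $i$ becomes the assertion $a_i < 1-\rho_i$, equivalently $\rho_{i+1} < 1$, while \eqref{continued_fraction} will follow from identifying a certain radius with an edge length.

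First I would establish \eqref{continued_fraction_ineq}. Since $p$ is special, $p_1,\dots,p_{d+1}$ are in good position, so in particular $p_n \notin H(p_1,\dots,p_{n-1})$ for $3\le n\le d+1$. By Lemma~\ref{key_lemma}(1) the line $p_nQ_{n-1}$ is perpendicular to $H(p_1,\dots,p_{n-1})$, and $Q_{n-1}$ lies in that affine subspace, so $Q_{n-1}$ is the foot of the perpendicular from $p_n$ and the distance from $p_n$ to $H(p_1,\dots,p_{n-1})$ is $|p_n-Q_{n-1}|$. As $p_{n-1}$ lies on $S(p_1,\dots,p_{n-1})$ at distance $R_{n-1}$ from $Q_{n-1}$, the Pythagorean theorem gives $|p_n-Q_{n-1}|^2 = b_{n-1}^2 - R_{n-1}^2$. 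General position forces this to be positive, hence $\rho_{n-1}<1$; applying this for $n=3,\dots,d+1$ yields $\rho_j<1$ for $2\le j\le d$, which is precisely \eqref{continued_fraction_ineq} for $1\le i\le d-1$.

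Next I would prove the recurrence \eqref{continued_fraction}. Fix $i\ge 1$ and apply Lemma~\ref{key_lemma} with $n=d+1$ to the $d+1$ points $p_i,p_{i+1},\dots,p_{i+d}$, which are in good position. The center of their circumsphere is $p_{i+d+1}$ by the defining property of an ICS, so $p_{i+d}$ lies on this sphere and $b_{i+d}=|p_{i+d}-p_{i+d+1}|$ equals its radius. Substituting this value of the radius into the formula of Lemma~\ref{key_lemma}(2) and dividing both sides by $4b_{i+d}^2$, the left-hand side becomes $a_{i+d-1}=b_{i+d-1}^2/(4b_{i+d}^2)$ and the right-hand side collapses to the continued fraction in $a_i,a_{i+1},\dots,a_{i+d-2}$ appearing in \eqref{continued_fraction}, as desired.

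For the converse, given positive reals $a_1,\dots,a_{d-1}$ satisfying \eqref{continued_fraction_ineq}, I would set $b_1=1$ and $b_{i+1}=b_i/(2\sqrt{a_i})$, so that $a_i=b_i^2/(4b_{i+1}^2)$, and then build $p_1,\dots,p_{d+1}$ inductively: having placed $p_1,\dots,p_{n-1}$ in good position, I put $p_n = Q_{n-1} + \sqrt{b_{n-1}^2-R_{n-1}^2}\,u$ for a unit vector $u$ orthogonal to $H(p_1,\dots,p_{n-1})$. Such a $u$ exists because $\dim H(p_1,\dots,p_{n-1}) = n-2 < d$ when $n\le d+1$, and the square root is real and positive because \eqref{continued_fraction_ineq} gives $\rho_{n-1}<1$ by the same induction as above; a direct Pythagorean computation then shows $|p_k-p_n|=b_{n-1}$ for all $k<n$, so the points are in good position with the prescribed edge lengths and hence with first characteristic values $a_1,\dots,a_{d-1}$, after which Corollary~\ref{extension_lemma} extends them to a special ICS. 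I expect the main obstacle to be the index bookkeeping that identifies $1-\rho_i$ with the continued fraction in \eqref{continued_fraction_ineq} and matches the geometric general-position condition to the algebraic inequality over the correct range; once this dictionary is fixed, both directions follow quickly from Lemma~\ref{key_lemma} and Corollary~\ref{extension_lemma}.
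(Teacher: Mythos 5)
Your proposal is correct and takes essentially the same route as the paper's proof: both obtain \eqref{continued_fraction_ineq} from the radius formula of Lemma~\ref{key_lemma}(2) together with $R_i<b_i$ (positivity of the perpendicular distance from $p_{i+1}$ to $H(p_1,\dots,p_i)$), obtain \eqref{continued_fraction} from $R_{d+1}=b_{d+1}$ applied to the shifted points, and prove the converse by the same inductive placement of $p_n$ above the circumcenter $Q_{n-1}$ followed by Corollary~\ref{extension_lemma}. The only differences are cosmetic: your $\rho_i=(R_i/b_i)^2$ bookkeeping replaces the paper's explicit continued fractions, and you spell out the Pythagorean justification of $R_i<b_i$ that the paper leaves implicit.
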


\begin{proof}
    We write $b_i=|p_i-p_{i+1}|$.
    For $1\leq i\leq d+1$, let $R_i$ denote the radius of $S(p_1,p_2,\dots,p_i)$.
    Applying Lemma \ref{key_lemma} (2) to the points $p_1,p_2,\dots,p_i$, we obtain
    $$
        \biggl(\dfrac{R_i}{b_i}\biggr)^2 =
        \dfrac{a_{i-1}}{
            1-\dfrac{a_{i-2}}{
                1-\dfrac{a_{i-3}}{
                    \ddots -\dfrac{a_2}{1-a_1}
                }
            }
        }.
    $$
    Since we have $R_i<b_i\;(1\leq i\leq d)$, we see that (\ref{continued_fraction_ineq}) holds.
    Similarly, $R_{d+1}=b_{d+1}$ shows that (\ref{continued_fraction}) for $i=1$ holds.
    Applying this for the shifted sequence $(p_{i+k})_{i=1}^\infty$, we obtain (\ref{continued_fraction}) for general $i$.

    Conversely, suppose that $(a_i)_{i=1}^\infty$ is a sequence of positive real numbers satisfying (\ref{continued_fraction_ineq}) and (\ref{continued_fraction}).
    We take two points $p_1,p_2\in \mathbb{R}^d$ arbitrarily.
    Suppose that for some $2\leq i \leq d$, we have constructed $p_1,p_2,\dots,p_i\in \mathbb{R}^d$ in good position such that $a_j=b_j^2/4b_{j+1}^2$ holds for $1\leq j\leq i-2$, where $b_j=|p_j-p_{j+1}|$.
    By Lemma \ref{key_lemma} (2), the radius $R$ of $S(p_1,p_2,\dots,p_i)$ is given by
    $$
        R^2 =
                    \dfrac{b_{i-1}^2/4}{
                    1-\dfrac{a_{i-2}}{
                        1-\dfrac{a_{i-3}}{
                            \ddots -\dfrac{a_2}{1-a_1}
                        }
                    }
                }.
    $$
    Let $b_i=b_{i-1}/2\sqrt{a_{i-1}}$.
    Our assumption (\ref{continued_fraction_ineq}) shows that $b_i^2>R^2$, so there exists a point $p_{i+1}\in \mathbb{R}^d$ such that $|p_j-p_{i+1}|=b_i$ holds for $1\leq j\leq i$.
    Repeating this process, we obtain $p_1,p_2,\dots,p_{d+1}\in \mathbb{R}^d$ in good position such that $a_i=b_i^2/4b_{i+1}^2$ for $1\leq i\leq d-1$, where $b_i=|p_i-p_{i+1}|$.
    The claim now follows from Corollary \ref{extension_lemma}.
\end{proof}

\begin{example}\label{periodicity_3_and_4}
    When $d=3$, the recurrence relation (\ref{continued_fraction}) becomes
    $$
        a_{i+2}=1-\dfrac{a_{i+1}}{1-a_i}.
    $$
    Therefore the sequence $(a_i)_{i=1}^\infty$ can be computed in terms of $a_1$ and $a_2$ as follows:
    $$
        a_1,a_2,\dfrac{1-a_1-a_2}{1-a_1},\dfrac{a_1a_2}{(1-a_1)(1-a_2)},\dfrac{1-a_1-a_2}{1-a_2},a_1,a_2,\dots
    $$
    In particular, we see that $a_i=a_{i+5}$ holds for $i\geq 1$.
    This is known as the \emph{Lyness $5$-cycle}.
    When $d=4$, the recurrence relation (\ref{continued_fraction}) becomes
    $$
        a_{i+3}=1-\dfrac{a_{i+2}}{1-\dfrac{a_{i+1}}{1-a_i}}.
    $$
    Therefore the sequence $(a_i)_{i=1}^\infty$ can be computed in terms of $a_1$, $a_2$, and $a_3$ as follows:
    $$
        a_1,a_2,a_3,\dfrac{1-a_1-a_2-a_3+a_1a_3}{1-a_1-a_2},\dfrac{a_1a_2a_3}{(1-a_1-a_2)(1-a_2-a_3)},\dfrac{1-a_1-a_2-a_3+a_1a_3}{1-a_2-a_3},a_1,a_2,a_3,\dots
    $$
    In particular, we see that $a_i=a_{i+6}$ holds for $i\geq 1$.
\end{example}

We want to show that the characteristic sequence of a special $d$-dimensional ICS is $(d+2)$-periodic, i.e., $a_i=a_{i+d+2}$ holds for $i\geq 1$.
This can be checked by a direct computation as in Example \ref{periodicity_3_and_4}, but here we present a more conceptual proof:

\begin{lemma}[Lyness]\label{continued_fraction_periodic}
    Let $K=\mathbb{C}(a_1,a_2,\dots,a_{d-1})$ be a function field, and define $a_d,a_{d+1},\dots\in K$ by (\ref{continued_fraction}).
    Then we have $a_i=a_{i+d+2}$ for $i\geq 1$.
    In particular, the characteristic sequence of a special $d$-dimensional ICS is $(d+2)$-periodic.
\end{lemma}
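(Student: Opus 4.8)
The plan is to linearize the recurrence \eqref{continued_fraction} using Möbius transformations over the field $K=\mathbb{C}(a_1,a_2,\dots,a_{d-1})$. For each index $i$, let $\mu_i$ be the Möbius transformation of $\mathbb{P}^1(K)=K\cup\{\infty\}$ given by $\mu_i(x)=1-a_i/x$; this is invertible because each $a_i$ is a nonzero element of $K$. Writing $\Phi_i=\mu_{i+d-1}\circ\mu_{i+d-2}\circ\cdots\circ\mu_i$, the key initial observation is that the continued fraction on the right of \eqref{continued_fraction} is exactly $(\mu_{i+d-2}\circ\cdots\circ\mu_i)(1)$, so that \eqref{continued_fraction} is equivalent to the single clean statement that $\Phi_i(1)=0$ (apply the remaining map $\mu_{i+d-1}$ to $a_{i+d-1}$ and use $\mu_{i+d-1}(a_{i+d-1})=0$).

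Next I would extract more values of $\Phi_i$ from the neighbouring instances of this reformulated recurrence, via the telescoping identities $\Phi_{i+1}\circ\mu_i=\mu_{i+d}\circ\Phi_i$ and $\Phi_i\circ\mu_{i-1}=\mu_{i+d-1}\circ\Phi_{i-1}$, both immediate from the definition of $\Phi$. Evaluating the first at $x=\infty$ (where $\mu_i(\infty)=1$ and $\Phi_{i+1}(1)=0$) gives $\mu_{i+d}(\Phi_i(\infty))=0$, hence $\Phi_i(\infty)=a_{i+d}$. Evaluating the second at $x=1$ (where $\mu_{i-1}(1)=1-a_{i-1}$ and $\mu_{i+d-1}(\Phi_{i-1}(1))=\mu_{i+d-1}(0)=\infty$) gives $\Phi_i(1-a_{i-1})=\infty$. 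Together with $\Phi_i(1)=0$, the transformation $\Phi_i$ is now determined by its values at three points, so that
$$\Phi_i(x)=a_{i+d}\cdot\frac{x-1}{x-1+a_{i-1}}.$$

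Finally I would feed this explicit form back into the first telescoping identity $\mu_{i+d}\circ\Phi_i=\Phi_{i+1}\circ\mu_i$. A direct simplification shows the left-hand side is the map $x\mapsto -a_{i-1}/(x-1)$, while the right-hand side, using the analogous explicit expression for $\Phi_{i+1}$, is the map $x\mapsto -a_{i+d+1}/(x-1)$. Comparing these two rational functions forces $a_{i-1}=a_{i+d+1}$ for every $i\geq 2$, i.e.\ $a_k=a_{k+d+2}$ for all $k\geq 1$, which is the asserted $(d+2)$-periodicity; applied to a special $d$-dimensional ICS it yields periodicity of its characteristic sequence. The points needing care are the index bookkeeping (the argument simultaneously invokes the recurrence at $i-1$, $i$, $i+1$ and $i+2$, forcing the mild restriction $i\geq 2$) and the verification that every $a_i$ and every denominator above is a nonzero element of $K$, so that all the Möbius maps are genuinely invertible. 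I expect this nonvanishing to be the only real technical obstacle, and it is handled by noting that each $a_i$ is a nonzero rational function of $a_1,\dots,a_{d-1}$, as witnessed by specializing to any genuine (positive) ICS.
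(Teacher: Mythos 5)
Your proof is correct, and while it shares the paper's basic idea of linearizing the continued fraction by M\"obius transformations, it is a genuinely different argument. The paper embeds $K$ into the larger function field $L=\mathbb{C}(x_1,\dots,x_{d+2})$ via $a_i\mapsto C(x_i,x_{i+1},x_{i+2},x_{i+3})$ and uses the factorization $\psi[a,b,c,d]=\varphi[b,c,d]\circ\varphi[a,b,c]^{-1}$ to telescope the length-$d$ composition into $\varphi[x_{d+1},x_{d+2},x_1]\circ\varphi[x_1,x_2,x_3]^{-1}$; the $(d+2)$-periodicity is then manifest from the cyclic indexing of the $x_j$. You instead stay entirely inside $K$: you reformulate the recurrence as $\Phi_i(1)=0$, extract $\Phi_i(\infty)=a_{i+d}$ and $\Phi_i(1-a_{i-1})=\infty$ from the recurrence at neighbouring indices, pin down $\Phi_i$ by these three values, and compare the two factorizations of $\mu_{i+d}\circ\Phi_i=\Phi_{i+1}\circ\mu_i$, which both collapse to $x\mapsto -c/(x-1)$ with $c=a_{i-1}$ on one side and $c=a_{i+d+1}$ on the other. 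I verified the computations (including $\mu_{i+d}(\Phi_i(x))=-a_{i-1}/(x-1)$ and $\Phi_{i+1}(\mu_i(x))=-a_{i+d+1}/(x-1)$) and the index bookkeeping ($i\geq 2$, yielding $a_k=a_{k+d+2}$ for all $k\geq 1$); they are right. Your route buys independence from the auxiliary field $L$ and from the algebraic independence of the cross-ratios (which the paper's embedding tacitly assumes), at the cost of having to verify that every $a_i$ is a well-defined nonzero element of $K$ so that all the M\"obius maps are nondegenerate and the three interpolation points and values are distinct; your specialization to an actual ICS via Lemma \ref{recurrence}, which precedes this lemma and does not depend on it, handles that without circularity. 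What the paper's route buys is conceptual transparency: the period $d+2$ is literally the cyclic shift of $d+2$ points, whereas in your argument it emerges as the output of a computation.
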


\begin{proof}
    This is essentially due to \cite{Lyness_cross_ratio}; see also \cite{Griffiths2012}.
    We reproduce the proof for the sake of completeness.
    The idea to use the cross-ratio:
    $$
        C(a,b,c,d)=\dfrac{(a-b)(c-d)}{(a-c)(b-d)}.
    $$
    Let $L=\mathbb{C}(x_1,x_2,\dots,x_{d+2})$ be a function field.
    We regard $K$ as a subfield of $L$ by the embedding
    $$
        K\hookrightarrow L;\quad a_i\mapsto C(x_i,x_{i+1},x_{i+2},x_{i+3})\quad (1\leq i\leq d-1).
    $$
    We will show that $a_d=C(x_d,x_{d+1},x_{d+2},x_1)$.
    Using this repeatedly, we get $a_i=C(x_i,x_{i+1},x_{i+2},x_{i+3})$ for all $i\geq 1$, where we set $x_{i+d+2}=x_i$.
    This implies the desired periodicity.

    For distinct elements $a,b,c\in L$, we write $\varphi[a,b,c]$ for the M\"obius transformation on $\mathbb{P}^1_L$ which sends $a,b,c$ to $0,\infty,1$, respectively.
    Then we have $\varphi[b,c,d](a)=C(a,b,c,d)$.
    We also define a M\"obius transformation $\psi[a,b,c,d]$ of $\mathbb{P}^1_L$ by
    $$
        \psi[a,b,c,d](z)=\dfrac{C(a,b,c,d)}{1-z}.
    $$
    Then $\varphi[b,c,d]^{-1}\circ \psi[a,b,c,d]$ sends $0,\infty,1$ to $a,b,c$, respectively, so we get
    $$
        \varphi[b,c,d]^{-1}\circ \psi[a,b,c,d]=\varphi[a,b,c]^{-1},
    $$
    which implies $\psi[a,b,c,d]=\varphi[b,c,d]\circ\varphi[a,b,c]^{-1}$.
    Therefore we have
    \begin{align*}
    \psi[x_{d},x_{d+1},x_{d+2},x_1]\circ\psi[x_{d-1},x_d,x_{d+1},x_{d+2}]\circ \dots\circ\psi[x_1,x_2,x_3,x_4]
    =\varphi[x_{d+1},x_{d+2},x_1]\circ\varphi[x_1,x_2,x_3]^{-1}.
    \end{align*}
    In particular, the left hand side sends $0$ to $1$, i.e., 
    $$
        \dfrac{C(x_d,x_{d+1},x_{d+2},x_1)}{
            1-\dfrac{C(x_{d-1},x_d,x_{d+1},x_{d+2})}{
                1-\dfrac{C(x_{d-2},x_{d-1},x_d,x_{d+1})}{
                    \ddots -\dfrac{C(x_1,x_2,x_3,x_4)}{1-0}
                }
            }
        }=1.
    $$
    This shows that $a_d=C(x_d,x_{d+1},x_{d+2},x_1)$ as desired.
\end{proof}

\section{Parametrization of characteristic sequences}

In this section we study the parameter space of all possible characteristic sequences of special $d$-dimensional ICSs.
Since such sequences are $(d+2)$-periodic (Lemma \ref{continued_fraction_periodic}), it suffices to consider the first $d+2$ terms.

\begin{definition}
    We define $S_d\subset \mathbb{R}^{d+2}$ to be the set of all sequences $(x_1,x_2,\dots,x_{d+2})$ such that there exists a special $d$-dimensional ICS $p$ with characteristic sequence $(a_i)_{i=1}^\infty$ satisfying $x_i=a_i\;(1\leq i\leq d+2)$.
\end{definition}

\begin{lemma}
    The set $S_d$ has a cyclic symmetry $(x_1,x_2,\dots,x_{d+2})\mapsto (x_2,x_3,\dots,x_{d+2},x_1)$.
    Moreover, we have $S_d\subset (0,1)^{d+2}$.
\end{lemma}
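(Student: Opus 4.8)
The plan is to treat the two assertions in turn, establishing the cyclic symmetry first and then exploiting it to reduce the inclusion to a single inequality that is already available.

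For the cyclic symmetry, I would start from an arbitrary element $(x_1,x_2,\dots,x_{d+2})\in S_d$, realized by a special $d$-dimensional ICS $p=(p_i)_{i=1}^\infty$ whose characteristic sequence $(a_i)_{i=1}^\infty$ satisfies $x_i=a_i$ for $1\le i\le d+2$. The key observation is that the once-shifted sequence $p'=(p_{i+1})_{i=1}^\infty$ is again a special ICS: it is an ICS because the ICS condition is shift-invariant, and it is special because the definition of a special ICS guarantees that $p_2,p_3,\dots,p_{d+2}$ are in good position. Unwinding the definition of the characteristic sequence shows that the characteristic sequence of $p'$ is $(a_{i+1})_{i=1}^\infty$. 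By the $(d+2)$-periodicity established in Lemma \ref{continued_fraction_periodic}, we have $a_{d+3}=a_1$, so the first $d+2$ terms of the characteristic sequence of $p'$ are exactly $(a_2,a_3,\dots,a_{d+2},a_1)=(x_2,x_3,\dots,x_{d+2},x_1)$. Hence this tuple lies in $S_d$, which is precisely the asserted invariance under the cyclic shift.

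For the inclusion $S_d\subset(0,1)^{d+2}$, positivity of every coordinate is immediate, since the characteristic sequence consists of positive real numbers by definition (the points of an ICS are distinct, so $|p_i-p_{i+1}|>0$). For the upper bound, the idea is to reduce to the first coordinate using the cyclic symmetry just proved. Concretely, inequality (\ref{continued_fraction_ineq}) of Lemma \ref{recurrence} with $i=1$ degenerates to $a_1<1$ (the continued fraction on the right-hand side being empty, the bound $R_2<b_2$ gives $a_1=(R_2/b_2)^2<1$), so the first coordinate of any element of $S_d$ is strictly less than $1$. Applying the cyclic shift $k-1$ times moves $x_k$ into the first position while keeping the tuple inside $S_d$, and therefore $x_k<1$ for every $1\le k\le d+2$. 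Combining this with positivity yields $S_d\subset(0,1)^{d+2}$.

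The steps above are all short, and the only point demanding care is the verification that shifting a special ICS yields a special ICS with the expected characteristic sequence; this rests on the ``good position for all $i$'' clause built into the definition of a special ICS and on the periodicity from Lemma \ref{continued_fraction_periodic}. Everything else is a routine unwinding of definitions, so I do not anticipate a genuine obstacle beyond setting up this reduction cleanly.
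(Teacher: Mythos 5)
Your proof is correct and follows essentially the same route as the paper: the cyclic symmetry comes from shifting the ICS and invoking the $(d+2)$-periodicity of Lemma \ref{continued_fraction_periodic}, and the inclusion $S_d\subset(0,1)^{d+2}$ is reduced to $x_1\in(0,1)$ (from Lemma \ref{recurrence}) via that symmetry. You simply spell out the details that the paper leaves implicit.
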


\begin{proof}
    The first claim is clear from the periodicity of characteristic sequences.
    By Lemma \ref{recurrence}, we have $x_1\in (0,1)$ on $S_d$ .
    This implies $S_d\subset (0,1)^{d+2}$ by the cyclic symmetry.
\end{proof}

In order to write down the recurrence relation (\ref{recurrence}) without using continued fractions, we define the following auxiliary polynomials.

\begin{definition}\label{Kanda_function}
    For $n\geq 1$, we define a polynomial $F^{(n)}(x_1,x_2,\dots,x_n)$ by
    $$
        F^{(n)}(x_1,x_2,\dots,x_n)=\sum_{\substack{A\subset \{1,2,\dots,n\}\\ \forall i,j\in A,\:|i-j|\geq 2}} (-1)^{|A|}x_A,
    $$
    where $x_A=\prod_{i\in A}x_i$.
    For example, we have $F^{(1)}(x_1)=1-x_1$, $F^{(2)}(x_1,x_2)=1-x_1-x_2$, and $F^{(3)}(x_1,x_2,x_3)=1-x_1-x_2-x_3+x_1x_3$.
    We set $F^{(i)}=1$ for $i\leq 0$.
\end{definition}

\begin{lemma}\label{Kanda_function_properties}
    The following formulas hold true:
    \begin{enumerate}
        \item $F^{(n)}(x_1,x_2,\dots,x_n)=F^{(n-1)}(x_1,x_2,\dots,x_{n-1})-x_nF^{(n-2)}(x_1,x_2,\dots,x_{n-2})$.
        \item $F^{(n-1)}(x_1,\dots,x_{n-1})F^{(n-1)}(x_2,\dots,x_n)-F^{(n)}(x_1,\dots,x_n)F^{(n-2)}(x_2,\dots,x_{n-2})=x_1x_2\cdots x_n$.
        \item $1-\dfrac{x_n}{1-\dfrac{x_{n-1}}{1-\dfrac{x_{n-2}}{\ddots-\dfrac{x_2}{1-x_1}}}} = \dfrac{F^{(n)}(x_1,x_2,\dots,x_n)}{F^{(n-1)}(x_1,x_2,\dots,x_{n-1})}$.
    \end{enumerate}
\end{lemma}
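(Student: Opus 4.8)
The plan is to prove the three formulas in the order they are stated, using (1) as the basic tool for the other two; the only formula that requires a genuine idea is (2), so I expect that to be the main obstacle.

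For (1) I would argue directly from the combinatorial definition of $F^{(n)}$ by splitting the admissible index sets according to whether they contain $n$. The subsets $A\subseteq\{1,\dots,n\}$ with no two elements at distance $1$ and with $n\notin A$ are exactly the admissible subsets of $\{1,\dots,n-1\}$, and their signed monomials sum to $F^{(n-1)}(x_1,\dots,x_{n-1})$. If instead $n\in A$, then admissibility forces $n-1\notin A$, so $A\setminus\{n\}$ runs over the admissible subsets of $\{1,\dots,n-2\}$; each such term carries an extra factor $-x_n$, so these terms sum to $-x_nF^{(n-2)}(x_1,\dots,x_{n-2})$. Summing the two cases yields (1), and the conventions $F^{(i)}=1$ for $i\le 0$ make the identity valid down to $n=1$.

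For (3) I would induct on the depth $n$ of the continued fraction, taking (1) as known. Writing $D_n$ for the left-hand side, the base case $D_1=1-x_1=F^{(1)}(x_1)/F^{(0)}$ is immediate, and the nested structure gives $D_n=1-x_n/D_{n-1}$. Substituting the inductive hypothesis $D_{n-1}=F^{(n-1)}(x_1,\dots,x_{n-1})/F^{(n-2)}(x_1,\dots,x_{n-2})$ and combining over a common denominator, the numerator becomes $F^{(n-1)}(x_1,\dots,x_{n-1})-x_nF^{(n-2)}(x_1,\dots,x_{n-2})$, which equals $F^{(n)}(x_1,\dots,x_n)$ by (1). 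Hence $D_n=F^{(n)}/F^{(n-1)}$, completing the induction.

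Formula (2) is the substantive one, and I would treat it as the continuant (Desnanot--Jacobi) identity attached to the three-term recurrence (1). To lighten notation, write $F_{[i,j]}$ for the continuant $F^{(j-i+1)}(x_i,\dots,x_j)$ on the consecutive block $x_i,\dots,x_j$, with the convention $F_{[i,j]}=1$ when $j<i$; then (1) applies verbatim to each block, giving $F_{[1,n]}=F_{[1,n-1]}-x_nF_{[1,n-2]}$ and $F_{[2,n]}=F_{[2,n-1]}-x_nF_{[2,n-2]}$. Let $P_n$ denote the left-hand side of (2), that is $P_n=F_{[1,n-1]}F_{[2,n]}-F_{[1,n]}F_{[2,n-1]}$. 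Substituting the two instances of (1), the products $F_{[1,n-1]}F_{[2,n-1]}$ cancel and the remaining terms regroup as $P_n=x_n\bigl(F_{[1,n-2]}F_{[2,n-1]}-F_{[1,n-1]}F_{[2,n-2]}\bigr)=x_nP_{n-1}$. Together with the base case $P_1=x_1$, this recursion telescopes to $P_n=x_1x_2\cdots x_n$. The cancellation is not an accident: if $A_k=\left(\begin{smallmatrix}1 & -x_k\\ 1 & 0\end{smallmatrix}\right)$ is the transfer matrix of (1), with $\det A_k=x_k$, then $\Phi_n:=\left(\begin{smallmatrix}F_{[1,n]} & F_{[2,n]}\\ F_{[1,n-1]} & F_{[2,n-1]}\end{smallmatrix}\right)$ satisfies $\Phi_n=A_n\Phi_{n-1}$, so $-P_n=\det\Phi_n=x_n\det\Phi_{n-1}$ is simply multiplicativity of the determinant. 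Identifying this structure and checking that the cross terms cancel is the only real obstacle; everything else is routine induction from (1).
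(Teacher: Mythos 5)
Your proposal is correct and follows essentially the same route as the paper: (1) by splitting admissible subsets according to whether they contain $n$, (3) by induction on the depth using (1), and (2) by expanding both products via (1), cancelling the cross terms, and telescoping the resulting recursion $P_n=x_nP_{n-1}$ — which is exactly the paper's inductive argument, here supplemented by the (correct but optional) transfer-matrix/determinant interpretation. No gaps.
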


\begin{proof}
    The formula (1) is clear from the definition.
    Let us prove (2) by induction on $n$.
    The claim is trivial for $n=1$, so we assume that $n\geq 2$.
    By (1), we have
    \begin{align*}
        &F^{(n-1)}(x_1,\dots,x_{n-1})F^{(n-1)}(x_2,\dots,x_n)\\
        ={}&F^{(n-1)}(x_1,\dots,x_{n-1})(F^{(n-2)}(x_2,\dots,x_{n-1})-x_nF^{(n-3)}(x_2,\dots,x_{n-2})),\\
        &F^{(n)}(x_1,\dots,x_n)F^{(n-2)}(x_2,\dots,x_{n-1})\\
        ={}&(F^{(n-1)}(x_1,\dots,x_{n-1})-x_nF^{(n-2)}(x_1,\dots,x_{n-2}))F^{(n-2)}(x_2,\dots,x_{n-1}).
    \end{align*}
    Therefore the left hand side of (2) is equal to
    $$
        x_n(F^{(n-1)}(x_1,\dots,x_{n-1})F^{(n-3)}(x_2,\dots,x_{n-2})-F^{(n-2)}(x_1,\dots,x_{n-2})F^{(n-2)}(x_2,\dots,x_{n-1})),
    $$
    which is, by the induction hypothesis, equal to the right hand side of (2).
    Finally, we prove (3) by induction on $n$.
    The claim is trivial for $n=1$, so we assume that $n\geq 2$.
    By the induction hypothesis, the left hand side is equal to
    $$
    1-\dfrac{x_n}{\biggl(\dfrac{F^{(n-1)}(x_1,x_2,\dots,x_{n-1})}{F^{(n-2)}(x_1,x_2,\dots,x_{n-2})}\biggr)}
    =\dfrac{F^{(n-1)}(x_1,x_2,\dots,x_{n-1})-x_nF^{(n-2)}(x_1,x_2,\dots,x_{n-2})}{F^{(n-1)}(x_1,x_2,\dots,x_{n-1})}.
    $$
    The desired formula follows from this and (1).
\end{proof}

\begin{lemma}\label{recurrence_explicit}
    Let $p$ be a special $d$-dimensional ICS.
    Then the characteristic sequence $(a_i)_{i=1}^\infty$ satisfies
    \begin{align}
        \label{continued_fraction_ineq_x}
        F^{(i)}(a_1,a_2,\dots,a_i) &>0\quad (1\leq i\leq d-1),\\
        \label{continued_fraction_x}
        F^{(d)}(a_i,a_{i+1},\dots,a_{i+d-1})&=0\quad (i\geq 1).
    \end{align}
    Conversely, for any sequence of positive real numbers $(a_1,a_2,\dots,a_{d-1})$ satisfying (\ref{continued_fraction_ineq_x}), there is a special $d$-dimensional ICS such that the first $d-1$ terms of the characteristic sequence is given by $(a_1,a_2,\dots,a_{d-1})$.
\end{lemma}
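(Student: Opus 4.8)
The entire statement is a translation of Lemma \ref{recurrence} through the dictionary supplied by Lemma \ref{Kanda_function_properties}, so the plan is to feed the continued-fraction identities (\ref{continued_fraction_ineq}) and (\ref{continued_fraction}) into parts (1) and (3) of that lemma. The workhorse is Lemma \ref{Kanda_function_properties}(3), which rewrites the relevant continued fraction as a ratio $F^{(n)}/F^{(n-1)}$, together with the three-term recurrence $F^{(n)}=F^{(n-1)}-x_nF^{(n-2)}$ from Lemma \ref{Kanda_function_properties}(1). No new geometry is needed; everything is an algebraic manipulation, with the only subtle point being the well-definedness (nonvanishing of denominators) of the continued fractions, which I will control by carrying positivity of the intermediate $F^{(j)}$ along the induction.

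For the inequalities (\ref{continued_fraction_ineq_x}) I would argue by induction on $i$, proving at each stage that $F^{(j)}(a_1,\dots,a_j)>0$ for all $j\le i$, with the conventions $F^{(0)}=F^{(-1)}=1>0$ providing the base. Assuming $F^{(j)}(a_1,\dots,a_j)>0$ for $j<i$, every denominator in the continued fraction of (\ref{continued_fraction_ineq}) is nonzero, so Lemma \ref{Kanda_function_properties}(3) with $x_j=a_j$ and $n=i-1$ identifies that continued fraction with the positive quantity $F^{(i-1)}(a_1,\dots,a_{i-1})/F^{(i-2)}(a_1,\dots,a_{i-2})$. Multiplying the inequality $a_i<F^{(i-1)}/F^{(i-2)}$ through by $F^{(i-2)}(a_1,\dots,a_{i-2})>0$ and applying Lemma \ref{Kanda_function_properties}(1) turns it into $F^{(i-1)}-a_iF^{(i-2)}=F^{(i)}(a_1,\dots,a_i)>0$, which both closes the induction and gives the claim.

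For the equalities (\ref{continued_fraction_x}) I would first observe that for each $i\ge 1$ the shifted sequence $(p_{i+k})_{k=0}^\infty$ is again the characteristic sequence of a special ICS, so the inequality part already proved yields $F^{(k)}(a_i,\dots,a_{i+k-1})>0$ for $1\le k\le d-1$; in particular the denominators in (\ref{continued_fraction}) are nonzero. Applying Lemma \ref{Kanda_function_properties}(3) to the shifted variables $x_j=a_{i+j-1}$ with $n=d-1$ rewrites (\ref{continued_fraction}) as $a_{i+d-1}=F^{(d-1)}(a_i,\dots,a_{i+d-2})/F^{(d-2)}(a_i,\dots,a_{i+d-3})$; clearing the denominator and invoking Lemma \ref{Kanda_function_properties}(1) once more gives precisely $F^{(d)}(a_i,\dots,a_{i+d-1})=0$.

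Finally, the converse is immediate once one notes that the computation of the inequality step is reversible: for a tuple of positive reals $(a_1,\dots,a_{d-1})$, conditions (\ref{continued_fraction_ineq_x}) and (\ref{continued_fraction_ineq}) are equivalent, by the same induction on $i$ using parts (1) and (3). Hence any tuple satisfying (\ref{continued_fraction_ineq_x}) satisfies (\ref{continued_fraction_ineq}), and the desired special ICS is produced by the converse direction of Lemma \ref{recurrence}. I expect the main (though modest) obstacle to be bookkeeping of indices and the verification that the continued fractions never pass through an undefined value; this is handled uniformly by the simultaneous positivity $F^{(j)}>0$ threaded through the induction, so no separate degeneracy analysis is required.
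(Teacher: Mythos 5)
Your proposal is correct and matches the paper's approach: the paper's proof of this lemma is literally the one-line statement that it follows from Lemmas \ref{recurrence} and \ref{Kanda_function_properties}, and your write-up simply fills in the routine induction (tracking positivity of the intermediate $F^{(j)}$ to justify the continued-fraction manipulations) that the paper leaves implicit.
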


\begin{proof}
    This follows from Lemmas \ref{recurrence} and \ref{Kanda_function_properties}.
\end{proof}

\begin{definition}
    We write $F[i,j]=F^{(j-i+1)}(x_i,x_{i+1},\dots,x_j)$.
    We define $U_d\subset \mathbb{R}^{d-1}$ to be the open subset of $\mathbb{R}^{d-1}$ defined by
    $$
        U_d = \{(x_1,x_2,\dots,x_{d-1})\in \mathbb{R}^{d-1}\mid x_i>0,\;F[1,i]>0\;(1\leq i\leq d-1)\}.
    $$
\end{definition}

\begin{theorem}
    There is a bijection
    $
    S_d\xrightarrow{\sim} U_d;\;(x_1,x_2,\dots,x_{d+2})\mapsto (x_1,x_2,\dots,x_{d-1}).
    $
\end{theorem}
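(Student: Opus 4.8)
The plan is to show that the coordinate projection $\pi\colon (x_1,\dots,x_{d+2})\mapsto(x_1,\dots,x_{d-1})$ restricts to a bijection $S_d\to U_d$, with all three required properties extracted from Lemma \ref{recurrence_explicit}. First, to see that $\pi$ maps $S_d$ into $U_d$: if $(x_1,\dots,x_{d+2})\in S_d$ then it is a window of the characteristic sequence of some special ICS, so the forward direction of Lemma \ref{recurrence_explicit} gives $x_i>0$ and $F[1,i]=F^{(i)}(x_1,\dots,x_i)>0$ for $1\leq i\leq d-1$; these are exactly the defining conditions of $U_d$. Surjectivity is then immediate from the converse direction of Lemma \ref{recurrence_explicit}: given $(x_1,\dots,x_{d-1})\in U_d$, it produces a special $d$-dimensional ICS whose characteristic sequence begins with these $d-1$ terms, and the first $d+2$ terms of that sequence form a point of $S_d$ lying over $(x_1,\dots,x_{d-1})$.

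The substance of the statement is injectivity, i.e.\ that the three remaining coordinates $x_d,x_{d+1},x_{d+2}$ are uniquely reconstructed from $x_1,\dots,x_{d-1}$. For this I would use the vanishing relation (\ref{continued_fraction_x}), which gives $F^{(d)}(x_i,\dots,x_{i+d-1})=0$ for every $i$. Feeding the three-term identity of Lemma \ref{Kanda_function_properties}(1) into the cases $i=1,2,3$ and solving for the final entry yields
\[
    x_{i+d-1}=\frac{F[i,\,i+d-2]}{F[i,\,i+d-3]}\qquad (i=1,2,3),
\]
which successively pins down $x_d$, then $x_{d+1}$ (whose formula already involves the just-determined $x_d$), then $x_{d+2}$. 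Hence any two points of $S_d$ sharing their first $d-1$ coordinates coincide, and $\pi$ is injective.

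The one point that needs care---and the main obstacle---is that the three denominators $F[1,d-2]$, $F[2,d-1]$, $F[3,d]$ appearing above are nonzero, since otherwise the recurrence could not be inverted. Each of them is a value of $F^{(d-2)}$ on $d-2$ consecutive terms of the characteristic sequence. The first, $F[1,d-2]$, is positive by the definition of $U_d$ (or equals $1$ when $d=2$, by the convention $F^{(i)}=1$ for $i\leq 0$). For the other two I would invoke the shift-invariance built into the setup: since the good-position condition defining a special ICS holds on every length-$(d+1)$ window, each shift $(p_{i+k})_{i=1}^\infty$ is again a special ICS, so the inequalities of Lemma \ref{recurrence_explicit} apply to all of them. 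This upgrades the single chain $F[1,i]>0$ to positivity of $F^{(k)}$ on every window of $k\leq d-1$ consecutive terms; in particular $F[2,d-1]>0$ and $F[3,d]>0$. With the denominators positive, the reconstruction above is valid, and injectivity together with the surjectivity and well-definedness established above yields the desired bijection.
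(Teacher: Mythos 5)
Your proof is correct and follows essentially the same route as the paper: well-definedness and surjectivity are read off from the forward and converse directions of Lemma \ref{recurrence_explicit}, and injectivity holds because the relation $F^{(d)}(x_i,\dots,x_{i+d-1})=0$ determines $x_d,x_{d+1},x_{d+2}$ successively from $x_1,\dots,x_{d-1}$. The paper dispatches the injectivity step in one line using the continued-fraction form of the recurrence, whereas you additionally verify that the denominators $F[1,d-2]$, $F[2,d-1]$, $F[3,d]$ are positive via shift-invariance of the special-ICS conditions---a detail the paper leaves implicit but which your argument correctly supplies.
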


\begin{proof}
    For any $(x_1,x_2,\dots,x_{d+2})$, we have $(x_1,x_2,\dots,x_{d-1})\in U_d$ by Lemma \ref{recurrence}.
    Moreover, the same lemma shows that $S_d\to U_d$ is surjective.
    Since $x_d,x_{d+1},x_{d+2}$ are uniquely determined by $x_1,x_2,\dots,x_{d-1}$ by the recurrence relation (\ref{continued_fraction}), this map is bijective.
\end{proof}

The above theorem says that the parameter space $S_d$ of characteristic sequences can be identified with the open subset $U_d$ of $\mathbb{R}^{d-1}$.
In particular, we can regard $x_d,x_{d+1},x_{d+2}$ as functions on $U_d$.
Next we study the function $\sqrt{x_1x_2\cdots x_{d+2}}$ on $S_d$ using this identification.

\begin{lemma}\label{product_max_exist}
    The function $\sqrt{x_1x_2\cdots x_{d+2}}$ on $S_d$ attains a maximum, and its infimum is $0$.
\end{lemma}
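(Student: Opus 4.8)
The plan is to study the polynomial $P=x_1x_2\cdots x_{d+2}$, so that the function in the statement is $\sqrt{P}$; since $P>0$ on $S_d$, it is enough to show that $P$ attains a maximum on $S_d$ and that $\inf_{S_d}P=0$. Because $S_d\subset(0,1)^{d+2}$, its closure $\overline{S_d}$ in $\mathbb{R}^{d+2}$ is compact, and the polynomial $P$ is continuous there, so $P$ attains a maximum on $\overline{S_d}$; this maximum is positive since $S_d\neq\emptyset$. The entire argument then hinges on one claim:
\[
P=0 \text{ on the boundary } \partial S_d=\overline{S_d}\setminus S_d.
\]
Granting this, the maximum over $\overline{S_d}$ is not attained on $\partial S_d$ and hence is attained inside $S_d$; moreover $\partial S_d\neq\emptyset$ (as $S_d\cong U_d$ is a bounded nonempty open subset of $\mathbb{R}^{d-1}$), and writing any $x^*\in\partial S_d$ as a limit of points of $S_d$ yields a sequence on which $P\to P(x^*)=0$, so $\inf_{S_d}P=0$ while $P>0$ on $S_d$.

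To prove the claim I would establish the stronger statement that every point of $\overline{S_d}$ with all coordinates strictly positive already lies in $S_d$; then every point of $\partial S_d$ has a vanishing coordinate, where $P=0$ trivially. By Lemma~\ref{recurrence_explicit} together with the cyclic symmetry of $S_d$, a point of $S_d$ is characterised (writing indices cyclically modulo $d+2$) by $x_i>0$, by $F[i,i+d-1]=0$ for all $i$ (the length-$d$ windows), and by $F[i,i+k]>0$ for all $i$ and $0\le k\le d-2$ (the shorter windows). Passing to the closure, any $x^*\in\overline{S_d}$ satisfies $x^*_i\ge0$, $F[i,i+d-1]=0$, and $F[i,i+k]\ge0$ for $0\le k\le d-2$. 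Thus it remains to show: if all $x^*_i>0$, then all these inequalities are strict.

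This last positivity propagation is the crux, and it is the step I expect to be the main obstacle; I would carry it out in three moves, all via Lemma~\ref{Kanda_function_properties}. First, no coordinate equals $1$: if $x^*_j=1$ then $F[j,j+1]=1-x^*_j-x^*_{j+1}=-x^*_{j+1}<0$, contradicting $F[j,j+1]\ge0$ for $d\ge3$ (for $d=2$ this is a length-$d$ window, so it equals $0$ and forces $x^*_{j+1}=0$, again impossible); hence $0<x^*_i<1$ for all $i$. Second, the length-$(d-1)$ windows are positive: applying the identity of Lemma~\ref{Kanda_function_properties}(2) to the vanishing window $F[i,i+d-1]=0$ gives $F[i,i+d-2]\,F[i+1,i+d-1]=x^*_i\cdots x^*_{i+d-1}>0$, so both nonnegative factors are in fact positive. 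Third, for a fixed $i$ set $G_k=F[i,i+k]$, so that $G_{-1}=1$, $G_0=1-x^*_i>0$, $G_{d-2}>0$, $G_k\ge0$ for $0\le k\le d-2$, and $G_{k+1}=G_k-x^*_{i+k+1}G_{k-1}$ by Lemma~\ref{Kanda_function_properties}(1); if some $G_k$ with $1\le k\le d-3$ vanished, then for the least such $k$ one would have $G_{k-1}>0$ and $G_{k+1}=-x^*_{i+k+1}G_{k-1}<0$, contradicting $G_{k+1}\ge0$. Hence all shorter windows are positive.

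Combining the three moves shows $x^*\in S_d$, which proves the claim and therefore both assertions: the maximum is attained in $S_d$, and the infimum equals $0$ and is not attained. Apart from the boundary claim, the proof uses only the compactness of $\overline{S_d}$ and the combinatorial identities for the polynomials $F^{(n)}$; the delicate input is the propagation argument of the previous paragraph, where the order of the three moves (ruling out the value $1$, then the longest short windows, then descending) is exactly what makes the induction close.
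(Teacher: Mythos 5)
Your proof is correct, but it takes a genuinely different route from the paper's. The paper works entirely inside the parameter space $U_d\subset\mathbb{R}^{d-1}$: it exhausts $U_d$ by the compact sets $V_\varepsilon=\{x_i\geq\varepsilon,\;F[1,i]\geq\varepsilon^i\}$ and shows, by a minimal-index argument with the recurrence $F[1,i_0+1]=F[1,i_0]-x_{i_0+1}F[1,i_0-1]$, that $x_1x_2\cdots x_{d+2}<\varepsilon$ off $V_\varepsilon$; the existence of the maximum and the vanishing of the infimum both drop out of this one estimate. You instead compactify $S_d$ inside $\mathbb{R}^{d+2}$ and prove the stronger structural statement that every point of $\overline{S_d}$ with all coordinates positive already lies in $S_d$, so that the product vanishes identically on $\overline{S_d}\setminus S_d$. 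Your positivity-propagation argument is sound, including the edge cases $d=2,3$: ruling out the value $1$, then using part (2) of Lemma \ref{Kanda_function_properties} at a vanishing length-$d$ window to make the length-$(d-1)$ windows positive, then descending with the three-term recurrence is exactly the right order, and the cyclic characterization of $S_d$ you invoke does follow from Lemma \ref{recurrence_explicit}, the cyclic symmetry, and the bijection $S_d\xrightarrow{\sim}U_d$ (whose inverse is continuous by Lemma \ref{Kanda_triple}, which is what makes $S_d$ non-closed and $\overline{S_d}\setminus S_d$ nonempty) --- though both of these points deserve a sentence of justification rather than an assertion. On balance the paper's argument is shorter and needs only the recurrence from part (1); yours costs the extra identity (2) and the cyclic symmetry, but buys a genuinely stronger conclusion, namely a description of where the closure of $S_d$ fails to be in $S_d$, and avoids the slightly ad hoc choice of the thresholds $\varepsilon^i$.
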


\begin{proof}
    For $\varepsilon\in (0,1)$, we define a subset $V_\varepsilon\subset U_d$ by
    $$
        V_\varepsilon = \{(x_1,x_2,\dots,x_{d-1})\in \mathbb{R}^{d-1}\mid x_i\geq \varepsilon,\; F[1,i]\geq \varepsilon^i\;(1\leq i\leq d-1)\}.
    $$
    Then $V_\varepsilon$ is compact and we have $U_d=\bigcup_{\varepsilon\in (0,1)}V_\varepsilon$.
    Therefore it suffices to show that $x_1x_2\cdots x_{d+2}<\varepsilon$ holds on $U_d\setminus V_\varepsilon$.
    Since $S_d\subset (0,1)^{d+2}$, it suffices to show that for any $(x_1,x_2,\cdots,x_{d-1})\in U_d\setminus V_\varepsilon$, there is some $i$ with $1\leq i\leq d+2$ such that $x_i<\varepsilon$.

    Suppose towards contradiction that $(x_1,x_2,\cdots,x_{d-1})\in U_d\setminus V_\varepsilon$ and $x_i\geq \varepsilon$ for all $1\leq i\leq d+2$.
    Then there is some $i$ with $1\leq i\leq d-1$ such that $F[1,i]<\varepsilon^i$.
    Let $i_0$ be the minimum of such $i$.
    Then we have $F[1,i_0]\geq \varepsilon^{i_0-1}$, $F[1,i_0]<\varepsilon^{i_0}$ and hence
    $$
        F[1,i_0+1]=F[1,i_0]-x_{i_0+1}F[1,i_0-1]<\varepsilon^{i_0}-\varepsilon^{i_0}=0.
    $$
    This contradicts to the fact that $F[1,i]>0\;(1\leq i\leq d-1)$ and $F[1,d]=0$ on $U_d$.
\end{proof}

In order to compute the maximum value of $\sqrt{x_1x_2\cdots x_{d+2}}$ on $S_d$, we write down the functions $x_d,x_{d+1},x_{d+2}$ on $U_d$ in terms of $x_1,x_2,\dots,x_{d-1}$.

\begin{lemma}\label{Kanda_triple}
    As functions on $U_d$, we have
    $$
        x_d = \dfrac{F[1,d-1]}{F[1,d-2]},\quad
        x_{d+1} = \dfrac{x_1x_2\cdots x_{d-1}}{F[1,d-2]F[2,d-1]},\quad
        x_{d+2} = \dfrac{F[1,d-1]}{F[2,d-1]}.
    $$
\end{lemma}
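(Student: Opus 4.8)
The plan is to read off each of the three functions from the three instances of the recurrence relation \eqref{continued_fraction_x} that involve the largest indices, namely $F[1,d]=0$, $F[2,d+1]=0$, and $F[3,d+2]=0$, and then to massage them using the two identities of Lemma~\ref{Kanda_function_properties}. Throughout I would use the three-term recurrence Lemma~\ref{Kanda_function_properties}(1) in the window form $F[i,j]=F[i,j-1]-x_jF[i,j-2]$, together with its mirror $F[i,j]=F[i+1,j]-x_iF[i+2,j]$; the latter holds because the defining condition $|i-j|\geq 2$ in Definition~\ref{Kanda_function} is invariant under reversing the order of the variables, so each $F^{(n)}$ is palindromic. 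I would also use the bilinear identity Lemma~\ref{Kanda_function_properties}(2) in the window form $F[i,j-1]F[i+1,j]-F[i,j]F[i+1,j-1]=x_ix_{i+1}\cdots x_j$.

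The first formula is immediate: applying $F[i,j]=F[i,j-1]-x_jF[i,j-2]$ to $F[1,d]=0$ gives $F[1,d-1]=x_dF[1,d-2]$, hence $x_d=F[1,d-1]/F[1,d-2]$. Next I would compute $F[2,d]$ as an auxiliary quantity: the window identity with $i=1$, $j=d$ reads $F[1,d-1]F[2,d]-F[1,d]F[2,d-1]=x_1\cdots x_d$, and $F[1,d]=0$ then yields $F[2,d]=x_1\cdots x_d/F[1,d-1]=x_1\cdots x_{d-1}/F[1,d-2]$ after substituting the value of $x_d$. The formula for $x_{d+1}$ now follows by feeding this into $F[2,d+1]=0$: the recurrence gives $F[2,d]=x_{d+1}F[2,d-1]$, so $x_{d+1}=F[2,d]/F[2,d-1]=x_1\cdots x_{d-1}/(F[1,d-2]F[2,d-1])$.

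The remaining value $x_{d+2}$ is where I expect the main obstacle, since the relation $F[3,d+2]=0$ only produces $x_{d+2}=F[3,d+1]/F[3,d]$ in terms of windows anchored at index $3$, which must still be rewritten in terms of windows anchored at $1$ and $2$. I would obtain the numerator from the window identity with $i=2$, $j=d+1$ combined with $F[2,d+1]=0$, giving $F[2,d]F[3,d+1]=x_2\cdots x_{d+1}$, and the denominator from the mirror recurrence applied to $F[1,d]=0$, namely $F[1,d]=F[2,d]-x_1F[3,d]=0$, giving $F[3,d]=F[2,d]/x_1$. Combining these collapses $x_{d+2}$ to $x_1x_2\cdots x_{d+1}/F[2,d]^2$, and substituting the already-computed expressions for $x_d$, $x_{d+1}$, and $F[2,d]$ reduces this to $F[1,d-1]/F[2,d-1]$; this final algebraic simplification is the only genuine bookkeeping in the argument.

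Finally I would record that every division above is legitimate on $U_d$: the factors $F[1,d-2]$ and $F[1,d-1]$ are positive by the definition of $U_d$, $x_1>0$ by hypothesis, $F[2,d]>0$ by the formula just derived, and $F[2,d-1]=F^{(d-2)}(x_2,\dots,x_{d-1})>0$ because the once-shifted sequence $(x_2,\dots,x_{d+2},x_1)$ again lies in $S_d$ by the cyclic symmetry of $S_d$, so its parametrizing inequalities apply. All steps other than the collapse for $x_{d+2}$ are single applications of the two identities of Lemma~\ref{Kanda_function_properties}.
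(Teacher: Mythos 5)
Your proof is correct and follows essentially the same route as the paper: each of $x_d,x_{d+1},x_{d+2}$ is read off from the corresponding vanishing relation $F[1,d]=F[2,d+1]=F[3,d+2]=0$ and then simplified using the recurrence and bilinear identity of Lemma \ref{Kanda_function_properties}. The only differences are in bookkeeping --- you obtain $F[2,d]$ and $F[3,d+1]$ from the bilinear identity and justify the mirror recurrence $F[i,j]=F[i+1,j]-x_iF[i+2,j]$ by the palindromicity of $F^{(n)}$ (a correct observation, immediate from Definition \ref{Kanda_function}, that the paper does not state explicitly), whereas the paper expands the forward recurrence directly for $x_{d+1}$ and leaves $x_{d+2}$ as ``similar.''
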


\begin{proof}
    By the recurrence relation (\ref{continued_fraction_x}), we have
    $0 = F[1,d] = F[1,d-1] - x_dF[1,d-2]$
    on $U_d$, which proves the first formula.
    As for the second formula, we compute as follows:
    \begin{align*}
        x_{d+1} &= \dfrac{F[2,d]}{F[2,d-1]}=\dfrac{F[2,d-1]-x_dF[2,d-2]}{F[2,d-1]}\\
        &=\dfrac{F[1,d-2]F[2,d-1]-x_dF[1,d-2]F[2,d-2]}{F[1,d-2]F[2,d-1]}\\
        &=\dfrac{F[1,d-2]F[2,d-1]-F[1,d-1]F[2,d-2]}{F[1,d-2]F[2,d-1]}\\
        &=\dfrac{x_1x_2\cdots x_{d-1}}{F[1,d-2]F[2,d-1]}.
    \end{align*}
    Here, the last equality follows from Lemma \ref{Kanda_function_properties}.
    The third formula can be proved similarly.
\end{proof}

\begin{corollary}\label{product_explicit}
    As a function on $U_d$, we have $\sqrt{x_1x_2\cdots x_{d+2}} = \dfrac{x_1x_2\cdots x_{d-1}F[1,d-1]}{F[1,d-2]F[2,d-1]}$.
\end{corollary}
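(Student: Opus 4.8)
The plan is to reduce the statement to a direct substitution of the three formulas from Lemma \ref{Kanda_triple}, together with a short sign check. Since $\sqrt{x_1x_2\cdots x_{d+2}}$ and the product itself differ only by a square root, and since the factors $x_1,\dots,x_{d-1}$ already appear explicitly on the right-hand side, the key is to compute $x_dx_{d+1}x_{d+2}$ and verify that multiplying it by $x_1\cdots x_{d-1}$ produces a perfect square.

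First I would write $x_1x_2\cdots x_{d+2} = (x_1\cdots x_{d-1})\,x_d\,x_{d+1}\,x_{d+2}$ and substitute the expressions for $x_d$, $x_{d+1}$, $x_{d+2}$ from Lemma \ref{Kanda_triple}. The factors $F[1,d-1]$ arising from $x_d$ and from $x_{d+2}$ combine into $F[1,d-1]^2$; the two copies of $F[1,d-2]$ (one from $x_d$, one from $x_{d+1}$) and the two copies of $F[2,d-1]$ (one from $x_{d+1}$, one from $x_{d+2}$) land in the denominator as $F[1,d-2]^2F[2,d-1]^2$; and the extra factor $x_1\cdots x_{d-1}$ coming from $x_{d+1}$ combines with the leading $x_1\cdots x_{d-1}$ to give $(x_1\cdots x_{d-1})^2$. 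Thus the product is exactly
$$
x_1x_2\cdots x_{d+2} = \left(\dfrac{x_1x_2\cdots x_{d-1}\,F[1,d-1]}{F[1,d-2]\,F[2,d-1]}\right)^2,
$$
and the claimed identity follows upon taking square roots.

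The only genuine point to address is that the square root on the right must be the positive one, i.e.\ that the quantity inside the parentheses is positive on $U_d$. The factors $x_1\cdots x_{d-1}$, $F[1,d-1]$, and $F[1,d-2]$ are all positive by the defining inequalities of $U_d$ (recall $d-2\leq d-1$). The remaining factor $F[2,d-1]=F^{(d-2)}(x_2,\dots,x_{d-1})$ is not itself one of the defining inequalities, so I would justify its positivity separately: by the cyclic symmetry of $S_d$, the shifted sequence $(x_2,x_3,\dots,x_{d+2},x_1)$ again lies in $S_d$, whence the constraint $F^{(d-2)}(x_2,\dots,x_{d-1})>0$ holds. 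I expect this sign bookkeeping to be the only subtle part of the argument; the algebraic identity itself is an immediate consequence of Lemma \ref{Kanda_triple} and requires no further input.
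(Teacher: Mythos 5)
Your proof is correct and is essentially the argument the paper intends: the corollary is stated without proof precisely because multiplying the three formulas of Lemma \ref{Kanda_triple} by $x_1\cdots x_{d-1}$ visibly yields the square of the claimed expression, and your positivity check for $F[2,d-1]$ via the cyclic symmetry (equivalently, applying the defining inequalities to the shifted sequence, which is how the paper's proof of Lemma \ref{Kanda_triple} already treats $F[2,d-1]$) correctly settles the choice of square root.
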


\begin{lemma}\label{partial_derivative}
    Regarding $\sqrt{x_1x_2\cdots x_{d+2}}$ as a function on $U_d$, we have
    $$
        \dfrac{\partial}{\partial x_{d-1}}\sqrt{x_1x_2\cdots x_{d+2}}=0\iff x_d=x_{d+1}.
    $$
\end{lemma}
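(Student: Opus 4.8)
The plan is to differentiate the explicit formula for $\sqrt{x_1x_2\cdots x_{d+2}}$ supplied by Corollary \ref{product_explicit} and to reduce the vanishing of the derivative to a polynomial identity that turns out to be an instance of Lemma \ref{Kanda_function_properties}(2). Write $P=\sqrt{x_1x_2\cdots x_{d+2}}=\dfrac{x_1x_2\cdots x_{d-1}\,F[1,d-1]}{F[1,d-2]\,F[2,d-1]}$ as a function on $U_d$. Among the factors, only $x_1\cdots x_{d-1}$, $F[1,d-1]$ and $F[2,d-1]$ depend on the free variable $x_{d-1}$, while $F[1,d-2]=F^{(d-2)}(x_1,\dots,x_{d-2})$ does \emph{not}; tracking this dependence correctly is the first thing I must be careful about. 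I will therefore use logarithmic differentiation, and by Lemma \ref{Kanda_function_properties}(1) (differentiating $F^{(n)}$ in its last slot) I get $\partial_{x_{d-1}}F[1,d-1]=-F[1,d-3]$ and, for $d\geq 3$, $\partial_{x_{d-1}}F[2,d-1]=-F[2,d-3]$.

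Since $P>0$ throughout $U_d$, the vanishing of $\partial_{x_{d-1}}P$ is equivalent to the vanishing of
\[
\partial_{x_{d-1}}\log P=\dfrac{1}{x_{d-1}}-\dfrac{F[1,d-3]}{F[1,d-1]}+\dfrac{F[2,d-3]}{F[2,d-1]}.
\]
Here $x_{d-1}$, $F[1,d-2]$ and $F[1,d-1]$ are positive by the definition of $U_d$, and $F[2,d-1]>0$ follows from the cyclic symmetry of $S_d$ (the shifted tuple $(x_2,\dots,x_d)$ again lies in $U_d$). Multiplying through by the positive quantity $x_{d-1}F[1,d-1]F[2,d-1]$, I obtain that $\partial_{x_{d-1}}P=0$ is equivalent to
\[
F[1,d-1]\,F[2,d-1]=x_{d-1}\bigl(F[1,d-3]\,F[2,d-1]-F[1,d-1]\,F[2,d-3]\bigr).
\]

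The core of the argument is the polynomial identity $F[1,d-3]\,F[2,d-1]-F[1,d-1]\,F[2,d-3]=x_1x_2\cdots x_{d-2}$. I will prove it by inserting the relations $F[1,d-1]=F[1,d-2]-x_{d-1}F[1,d-3]$ and $F[2,d-1]=F[2,d-2]-x_{d-1}F[2,d-3]$ from Lemma \ref{Kanda_function_properties}(1): the two terms carrying the factor $x_{d-1}$ cancel, leaving $F[1,d-3]\,F[2,d-2]-F[1,d-2]\,F[2,d-3]$, which is exactly the case $n=d-2$ of Lemma \ref{Kanda_function_properties}(2) and hence equals $x_1\cdots x_{d-2}$. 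Substituting this into the displayed equivalence, its right-hand side becomes $x_{d-1}\cdot x_1\cdots x_{d-2}=x_1x_2\cdots x_{d-1}$, so $\partial_{x_{d-1}}P=0\iff F[1,d-1]\,F[2,d-1]=x_1x_2\cdots x_{d-1}$.

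Finally I translate the right-hand condition using Lemma \ref{Kanda_triple}. From $x_d=F[1,d-1]/F[1,d-2]$ and $x_{d+1}=x_1\cdots x_{d-1}/(F[1,d-2]F[2,d-1])$ one computes $x_d-x_{d+1}=\dfrac{F[1,d-1]\,F[2,d-1]-x_1\cdots x_{d-1}}{F[1,d-2]\,F[2,d-1]}$, whose denominator is positive; hence $x_d=x_{d+1}$ is equivalent to $F[1,d-1]\,F[2,d-1]=x_1\cdots x_{d-1}$, which is precisely the derivative condition just established. I expect the identity of the third paragraph to be the only place requiring genuine computation, and it collapses cleanly to Lemma \ref{Kanda_function_properties}(2); the remaining subtleties are purely boundary bookkeeping. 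In particular the degenerate case $d=2$ must be treated by hand, since then $F[2,d-1]=F[2,1]=1$ is constant and the reduction above does not apply: here $P=x_1(1-x_1)$, $x_d=1-x_1$, $x_{d+1}=x_1$, and both the equation $\partial_{x_1}P=0$ and the equation $x_d=x_{d+1}$ reduce to $x_1=1/2$, completing the proof.
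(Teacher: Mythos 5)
Your proof is correct and follows essentially the same route as the paper: logarithmic differentiation of the formula from Corollary \ref{product_explicit}, reduction of the cross term to $x_1\cdots x_{d-2}$ via Lemma \ref{Kanda_function_properties}, and translation of the resulting condition $F[1,d-1]F[2,d-1]=x_1\cdots x_{d-1}$ into $x_d=x_{d+1}$ by Lemma \ref{Kanda_triple}. The only additions are your explicit positivity checks and the separate treatment of $d=2$, which the paper leaves implicit.
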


\begin{proof}
    The left hand side is equivalent to $\dfrac{\partial}{\partial x_{d-1}}\log (\sqrt{x_1x_2\cdots x_{d+2}})=0$.
    Note that we have
    $$
        \dfrac{\partial}{\partial x_j} F[i,j]
        = \dfrac{\partial}{\partial x_j} (F[i,j-1]-x_j F[i,j-2]) = -F[i,j-2].
    $$
    Using this formula and Corollary \ref{product_explicit}, we can compute the logarithmic derivative as follows:
    \begin{align*}
        \dfrac{\partial}{\partial x_{d-1}}\log (\sqrt{x_1x_2\cdots x_{d+2}})
        & = \dfrac{1}{x_{d-1}}-\dfrac{F[1,d-3]}{F[1,d-1]}+\dfrac{F[2,d-3]}{F[2,d-1]}\\
        & = \dfrac{1}{x_{d-1}}-\dfrac{F[1,d-3]F[2,d-1]-F[1,d-1]F[2,d-3]}{F[1,d-1]F[2,d-1]}.
    \end{align*}
    On the other hand, Lemma \ref{Kanda_function_properties} shows that
    \begin{align*}
        &F[1,d-3]F[2,d-1]-F[1,d-1]F[2,d-3]\\
        =&F[1,d-3](F[2,d-2]-x_{d-1}F[2,d-3]) - (F[1,d-2]-x_{d-1}F[1,d-3])F[2,d-3]\\
        =&F[1,d-3]F[2,d-2]-F[1,d-2]F[2,d-3]\\
        =&x_1x_2\cdots x_{d-2}.
    \end{align*}
    Combining these two results, we see that
    \begin{align*}
        \dfrac{\partial}{\partial x_{d-1}}\sqrt{x_1x_2\cdots x_{d+2}}=0
        &\iff \dfrac{1}{x_{d-1}}=\dfrac{x_1x_2\cdots x_{d-2}}{F[1,d-1]F[2,d-1]}\\
        &\iff x_1x_2\cdots x_{d-1} = F[1,d-1]F[2,d-1].
    \end{align*}
    By Lemma \ref{Kanda_triple}, this is equivalent to $x_d=x_{d+1}$.
\end{proof}

\begin{lemma}\label{product_max}
    $\max_{x\in S}\sqrt{x_1x_2\cdots x_{d+2}}=(2^{d+2}\cos^{d+2}(\pi/(d+2)))^{-1}$.
\end{lemma}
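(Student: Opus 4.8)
The plan is to show that the maximum is attained at the \emph{constant} characteristic sequence, and then to evaluate the product there. First I would invoke Lemma~\ref{product_max_exist}: the function $P=\sqrt{x_1x_2\cdots x_{d+2}}$ attains a maximum on $S_d$. Since $P$ is continuous and strictly positive on the open set $U_d\cong S_d$, while the proof of that lemma shows $P<\varepsilon$ outside the compact set $V_\varepsilon$, the maximum is attained at an interior point $x^\ast\in U_d$, at which $P$ is a smooth function of the free coordinates $x_1,\dots,x_{d-1}$, and hence all of its partial derivatives vanish.

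The key structural input is that both $P$ and $S_d$ are invariant under the cyclic shift, so that parametrizing $S_d$ by any block of $d-1$ consecutive coordinates $x_{k+1},\dots,x_{k+d-1}$ is just the standard parametrization precomposed with a shift. Consequently the conclusion of Lemma~\ref{partial_derivative} transports to every shift: the partial derivative of $P$ with respect to the \emph{last} free coordinate $x_{k+d-1}$ vanishes if and only if $x_{k+d}=x_{k+d+1}$, with indices read modulo $d+2$. At the interior maximum $x^\ast$ this derivative vanishes in every such parametrization, so applying it for $k=0,1,\dots,d+1$ yields $x_{k+d}^\ast=x_{k+d+1}^\ast$ for all $k$, and therefore $x_1^\ast=x_2^\ast=\dots=x_{d+2}^\ast=:c$. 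This passage from the one-variable statement of Lemma~\ref{partial_derivative} to the full equality of all coordinates, using the cyclic symmetry to cover every consecutive pair, is the heart of the argument and the step I expect to be the main obstacle.

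It then remains to determine $c$ and to compute $P=c^{(d+2)/2}$. Writing $f_n=F^{(n)}(c,\dots,c)$, Lemma~\ref{Kanda_function_properties}(1) gives the linear recurrence $f_n=f_{n-1}-cf_{n-2}$ with $f_{-1}=f_0=1$. In the oscillatory regime $c>1/4$ (which is forced, since for $c\le 1/4$ the hyperbolic analogue shows $f_n>0$ for all $n$, so $f_d=0$ has no such root), the substitution $\cos\phi=1/(2\sqrt c)$ with $\phi\in(0,\pi/2)$ yields the closed form
\[
    f_n=c^{n/2}\,\frac{\sin\bigl((n+2)\phi\bigr)}{\sin(2\phi)},
\]
which I would verify directly from the recurrence and the initial values $f_{-1}=f_0=1$. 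The condition $x^\ast\in U_d$ means $f_i>0$ for $1\le i\le d-1$ together with $f_d=0$; the latter forces $(d+2)\phi\in\pi\mathbb{Z}$, while positivity of $\sin\bigl((i+2)\phi\bigr)$ for the range $3\le i+2\le d+1$ singles out $\phi=\pi/(d+2)$.

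Hence $c=\bigl(4\cos^2(\pi/(d+2))\bigr)^{-1}$, and evaluating the product at the constant configuration gives
\[
    \max_{x\in S_d}\sqrt{x_1x_2\cdots x_{d+2}}=c^{(d+2)/2}=\bigl(2^{d+2}\cos^{d+2}(\pi/(d+2))\bigr)^{-1},
\]
as claimed. Once the constant configuration has been pinned down by the symmetry argument, the solution of the recurrence and the final evaluation are entirely routine.
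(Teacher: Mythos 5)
Your proposal is correct and follows essentially the same route as the paper: existence of an interior maximizer via Lemma \ref{product_max_exist}, the cyclic symmetry combined with Lemma \ref{partial_derivative} to force all $d+2$ coordinates equal, and the linear recurrence $f_n=f_{n-1}-cf_{n-2}$ with the conditions $f_i>0$ ($1\le i\le d-1$), $f_d=0$ to pin down $c=(4\cos^2(\pi/(d+2)))^{-1}$. The only (harmless) difference is that you evaluate the maximum directly as $c^{(d+2)/2}$, whereas the paper routes through Corollary \ref{product_explicit} and the identity $t^{d-1}u_{d-1}/u_{d-2}^2$; both give the same value.
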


\begin{proof}
    By Lemma \ref{product_max_exist}, we can take a point $(a_1,a_2,\cdots,a_{d+2})\in S$ which attains the maximum value of $\sqrt{x_1x_2\cdots x_{d+2}}$.
    Then by Lemma \ref{partial_derivative}, we have $a_{d-1}=a_d$.
    Since $S_d$ has a cyclic symmetry, we get $a_1=a_2=\cdots=a_d=t$ for some $t>0$.
    In order to determine the value of $t$, we use that fact that
    $$
        F^{(i)}(t,t,\dots,t)>0\quad(1\leq i\leq d-1),\quad F^{(d)}(t,t,\cdots,t)=0.
    $$
    Set $u_n=F^{(n)}(t,t,\cdots,t)$.
    By definition, we have $u_1=1-t$, $u_2=1-2t$, and $u_n=u_{n-1}-tu_{n-2}$.
    If $t\leq 1/4$, then we can easily see that $u_n>0$ for all $n$, which contradicts to $u_d=0$.
    Therefore we have $t>1/4$.
    Solving the recurrence relation, we get
    $$
        u_n=\dfrac{\alpha^{n+2}-\beta^{n+2}}{\alpha-\beta},
    $$
    where $\alpha=(1+\sqrt{1-4t})/2$ and $\beta=(1-\sqrt{1-4t})/2$ are two solutions to $X^2-X+t=0$.
    By $u_i>0\;(1\leq i\leq d-1)$ and $u_d=0$, we see that the imaginary part of $\alpha^i$ is positive for $i=3,4,\cdots,d+1$ and zero for $i=d+2$.
    This implies $\arg \alpha = \pi/(d+2)$, $|\alpha|=(2\cos(\pi/(d+2)))^{-1}$, and $t=|\alpha|^2$.
    By Lemma \ref{product_explicit}, the value of $\sqrt{x_1x_2\cdots x_{d+2}}$ at $(t,t,\cdots,t)$ is given by
    $$
        t^{d-1}\cdot \dfrac{u_{d-1}}{u_{d-2}^2}=|\alpha|^{2d-2}\cdot \dfrac{(\alpha^{d+1}-\beta^{d+1})(\alpha-\beta)}{(\alpha^d-\beta^d)^2}=|\alpha|^d\cdot \dfrac{\sin^2(\pi/(d+2))}{\sin^2(2\pi/(d+2))},
    $$
    which is equal to $(2^{d+2}\cos^{d+2}(\pi/(d+2)))^{-1}$.
    This completes the proof.
\end{proof}

\section{Structure of ICSs}

In this section we study the structure of ICSs in terms of their characteristic sequence.

\begin{definition}
    Let $p$ be a special $d$-dimensional ICS with characteristic sequence $(a_i)_{i=1}^\infty$.
    We define the \emph{scale factor} of $p$ by
    $$
        r=\dfrac{1}{2^{d+2}\sqrt{a_1a_2\cdots a_{d+2}}}.
    $$
\end{definition}

\begin{lemma}\label{scale_factor_existence}
    Let $p$ be a special $d$-dimensional ICS with scale factor $r$.
    Then we have
    $$
        |p_{i+d+2}-p_{i+d+3}|=r|p_i-p_{i+1}|\quad (i\geq 1).
    $$
\end{lemma}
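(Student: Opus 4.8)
The plan is to translate the geometric statement into a statement about the characteristic sequence, and then read off the conclusion from its $(d+2)$-periodicity. Writing $b_i=|p_i-p_{i+1}|$ as in the proof of Lemma~\ref{key_lemma}, the defining relation $a_i=b_i^2/4b_{i+1}^2$ immediately yields $b_i/b_{i+1}=2\sqrt{a_i}$, since all the $b_i$ are positive. The target identity $|p_{i+d+2}-p_{i+d+3}|=r|p_i-p_{i+1}|$ is exactly $b_{i+d+2}=r\,b_i$, so it suffices to compute the ratio $b_i/b_{i+d+2}$.

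The key step is to telescope this ratio over a full period. For any $i\geq 1$,
$$
    \frac{b_i}{b_{i+d+2}}=\prod_{k=i}^{i+d+1}\frac{b_k}{b_{k+1}}=\prod_{k=i}^{i+d+1}2\sqrt{a_k}=2^{d+2}\Bigl(\prod_{k=i}^{i+d+1}a_k\Bigr)^{1/2}.
$$
The product on the right runs over $d+2$ consecutive indices. By Lemma~\ref{continued_fraction_periodic} the characteristic sequence is $(d+2)$-periodic, so these $d+2$ consecutive terms form, up to a cyclic shift, the same list $a_1,a_2,\dots,a_{d+2}$; hence $\prod_{k=i}^{i+d+1}a_k$ is independent of $i$ and equals $a_1a_2\cdots a_{d+2}$. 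Substituting and comparing with the definition $r=(2^{d+2}\sqrt{a_1a_2\cdots a_{d+2}})^{-1}$ of the scale factor, we obtain $b_i/b_{i+d+2}=2^{d+2}\sqrt{a_1a_2\cdots a_{d+2}}=1/r$, which rearranges to $b_{i+d+2}=r\,b_i$, as required.

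There is essentially no obstacle beyond bookkeeping: the only substantive input is the periodicity of Lemma~\ref{continued_fraction_periodic}, which is precisely what collapses the a priori $i$-dependent product of $d+2$ consecutive terms into the constant $a_1a_2\cdots a_{d+2}$ that appears in the definition of $r$. The one routine point to record is that the square root of a product of positive reals splits as the product of their square roots, which is legitimate here since every $a_k>0$.
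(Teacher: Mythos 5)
Your proof is correct and follows essentially the same route as the paper: telescope the ratio $b_i/b_{i+d+2}$ over $d+2$ consecutive steps using $b_k/b_{k+1}=2\sqrt{a_k}$, then invoke the $(d+2)$-periodicity of the characteristic sequence (Lemma \ref{continued_fraction_periodic}) to identify the resulting product with $a_1a_2\cdots a_{d+2}$ and hence with $1/r$.
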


\begin{proof}
    Let $(a_i)_{i=1}^\infty$ be the characteristic sequence of $p$.
    By Lemma \ref{recurrence} and Lemma \ref{continued_fraction_periodic}, we have $a_i=a_{i+d+2}$ for $i\geq 1$.
    In particular, we have
    $$
        \dfrac{|p_{i+d+2}-p_{i+d+3}|}{|p_i-p_{i+1}|}
        = \prod_{j=i}^{i+d+1}\dfrac{|p_{j+1}-p_{j+2}|}{|p_j-p_{j+1}|}
        = \dfrac{1}{2^{d+2}\sqrt{a_ia_{i+1}\cdots a_{i+d+1}}}=r
    $$
    for any $i\geq 1$.
    This proves the claim.
\end{proof}

\begin{lemma}\label{parallel_lemma}
    Let $p$ be a special $d$-dimensional ICS with scale factor $r$.
    Then we have
    $$
        p_{i+d+2}-p_{i+d+3} = -r (p_i-p_{i+1}) \quad (i\geq 1).
    $$
\end{lemma}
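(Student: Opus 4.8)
The plan is to work throughout with the edge vectors $e_j := p_{j+1}-p_j$, so that the desired identity $p_{i+d+2}-p_{i+d+3}=-r(p_i-p_{i+1})$ becomes $e_{i+d+2}=-r\,e_i$. Since Lemma \ref{scale_factor_existence} already gives $|e_{i+d+2}|=r|e_i|$, the entire content is to show that $e_{i+d+2}$ and $e_i$ are \emph{anti-parallel}. The first ingredient is a family of orthogonality relations extracted from Lemma \ref{key_lemma}(1): applied to each window $p_k,\dots,p_{k+d}$, it says that $e_{k+d}=p_{k+d+1}-p_{k+d}$ is perpendicular to $H(p_k,\dots,p_{k+d-1})$, whose direction space is spanned by $e_k,\dots,e_{k+d-2}$. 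Letting $k$ vary, this yields $\langle e_a,e_b\rangle=0$ whenever $2\le|a-b|\le d$. In particular, both $e_i$ and $e_{i+d+2}$ are orthogonal to $e_{i+2},e_{i+3},\dots,e_{i+d}$, and these $d-1$ vectors span the direction space of the hyperplane $H^\ast:=H(p_{i+2},\dots,p_{i+d+1})$. Hence $e_i$ and $e_{i+d+2}$ both lie on the normal line to $H^\ast$; this already establishes parallelism and reduces everything to a sign.

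To pin down the sign I would introduce the signed height $h(\cdot)$ above $H^\ast$ along a fixed unit normal $u$, so that $p_{i+2},\dots,p_{i+d+1}$ all have height $0$. Two geometric facts drive the argument. First, because $e_i\perp H^\ast$, the points $p_i$ and $p_{i+1}$ share the same foot on $H^\ast$; since the circumcenter $p_{i+d+1}\in H^\ast$ is equidistant from $p_i$ and $p_{i+1}$, the Pythagorean theorem forces $h(p_i)=-h(p_{i+1})$, i.e.\ $p_i$ and $p_{i+1}$ are mirror images across $H^\ast$ and $e_i=2h(p_{i+1})\,u$. Second, each of $p_{i+d+2}$ and $p_{i+d+3}$ is a circumcenter equidistant from the $d$ points $p_{i+2},\dots,p_{i+d+1}$ spanning $H^\ast$, so both lie on the common normal line $\ell$ through the in-$H^\ast$ circumcenter $Q^\ast$ of those points; write $\tau,\sigma$ for their respective heights and $\rho^\ast$ for the corresponding circumradius.

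The sign then drops out of the elementary ``circumcenter on the axis'' relation. Equidistance of $p_{i+d+3}$ (at height $\sigma$ on $\ell$) from $p_{i+d+2}$ (at height $\tau$ on $\ell$) and from the height-$0$ sphere of radius $\rho^\ast$ gives $(\sigma-\tau)^2=\sigma^2+\rho^{\ast 2}$, whence $\sigma-\tau=-\dfrac{\tau^2+\rho^{\ast 2}}{2\tau}$, so that $e_{i+d+2}=(\sigma-\tau)u$ has sign $-\sgn(\tau)$ along $u$. It remains to compare $\sgn(\tau)$ with $\sgn(h(p_{i+1}))$, and here Lemma \ref{key_lemma}(3), applied to the window $p_{i+1},\dots,p_{i+d+1}$ with circumcenter $p_{i+d+2}$, shows that the segment $p_{i+1}p_{i+d+2}$ misses $H^\ast$, so $p_{i+1}$ and $p_{i+d+2}$ lie on the same side and $\sgn(\tau)=\sgn(h(p_{i+1}))$. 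Combining, $e_{i+d+2}$ and $e_i=2h(p_{i+1})\,u$ point in opposite directions, and together with the length equality from Lemma \ref{scale_factor_existence} this yields $e_{i+d+2}=-r\,e_i$, which is exactly the claim.

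The main obstacle is precisely this sign. Parallelism and the length are almost immediate (the length is Lemma \ref{scale_factor_existence}, and parallelism follows at once from the orthogonality relations), but showing that the two vectors are genuinely anti-parallel rather than parallel is the geometric heart of the statement: it is what ultimately separates period $2d+4$ from period $d+2$ in Goddyn's conjecture. The two delicate verifications are that $p_{i+d+2}$ and $p_{i+d+3}$ truly land on the single normal line $\ell$, and that the side information from Lemma \ref{key_lemma}(3) correctly fixes $\sgn(\tau)$; the reflection identity $h(p_i)=-h(p_{i+1})$ is the clean observation that makes all the signs line up.
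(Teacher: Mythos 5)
Your argument is correct and follows essentially the same route as the paper's proof: both identify the hyperplane $H^\ast=H(p_{i+2},\dots,p_{i+d+1})$, show that the segments $p_ip_{i+1}$ and $p_{i+d+2}p_{i+d+3}$ are both normal to it, and then fix the sign by the same application of Lemma \ref{key_lemma} (3). One citation is imprecise, though. Applying Lemma \ref{key_lemma} (1) to the windows $p_k,\dots,p_{k+d}$ with $k\geq 1$ yields $\langle e_a,e_b\rangle=0$ for $2\leq|a-b|\leq d$ only when $\max(a,b)\geq d+1$; for $i=1$ and $d\geq 3$ this does not produce $e_1\perp e_3,\dots,e_1\perp e_d$, which you need both to conclude that $e_i$ lies on the normal line to $H^\ast$ and to justify that $p_i$ and $p_{i+1}$ have a common foot on $H^\ast$ in your reflection step. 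The missing orthogonalities are immediate from equidistance, which you already invoke elsewhere: each of $p_{i+2},\dots,p_{i+d+1}$ is equidistant from $p_i$ and $p_{i+1}$ (good position of the window $p_i,\dots,p_{i+d}$, plus the circumcenter property for $p_{i+d+1}$), so $H^\ast$ is contained in, hence equals, the perpendicular bisector hyperplane of $p_ip_{i+1}$ --- this is exactly how the paper obtains $e_i\perp H^\ast$ in one step, and it subsumes your reflection identity $h(p_i)=-h(p_{i+1})$. Finally, your height computation $(\sigma-\tau)^2=\sigma^2+\rho^{\ast2}$ re-derives the half-line statement of Lemma \ref{key_lemma} (1) in coordinates; it is valid, but the lemma could be cited directly there as the paper does.
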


\begin{proof}
    It suffices to prove that this holds for $i=1$.
    Let $H=H(p_3,p_4,\dots,p_{d+2})$ and let $Q$ be the center of $S(p_3,p_4,\dots,p_{d+2})$.
    By definition of ICS, we have
    $$
        |p_1-p_j|=|p_2-p_j|\quad (3\leq j\leq d+2).
    $$
    Therefore $H$ coincides with the hyperplane $\{x\in \mathbb{R}^d\mid |p_1-x|=|p_2-x|\}$, which is perpendicular to the line $p_1p_2$.
    On the other hand, applying Lemma \ref{key_lemma} (1) to the points $p_3,p_4,\dots,p_{d+3}$, we see that the half-line $p_{d+3}Q$ is also perpendicular to $H$, and the point $p_{d+4}$ lies on this half-line.
    Therefore the lines $p_ip_{i+1}$ and $p_{d+3}p_{d+4}$ are parallel.

    Applying Lemma \ref{key_lemma} (3) to the points $p_2,p_3,\dots,p_{d+2}$, we see that the segment $p_2p_{d+3}$ is disjoint from $H$.
    In other words, the points $p_2$ and $p_{d+3}$ are on the same side with respect to the hyperplane $H$.
    Combining with the fact that $p_{d+4}$ lies on the half-line $p_{d+3}Q$, we conclude that the vectors $p_1-p_2$ and $p_{d+3}-p_{d+4}$ are in the opposite direction.
    This completes the proof.
\end{proof}

\begin{theorem}\label{shift_vector_existence}
    Let $p$ be a special $d$-dimensional ICS with scale factor $r$.
    Then there exists a point $v \in \mathbb{R}^d$ such that $p_{i+d+2} =  v-r p_i$ holds for $i\geq 1$.
    In particular, $p$ is periodic if and only if $r=1$.
\end{theorem}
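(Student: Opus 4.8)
The plan is to first upgrade the vectorial relation of Lemma~\ref{parallel_lemma} into the claimed affine relation by a telescoping argument, and then to read off the periodicity criterion from the dynamics of the associated affine map $x\mapsto v-rx$.

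For the existence of $v$, I would set $v_i=p_{i+d+2}+rp_i$ for $i\geq 1$ and observe that Lemma~\ref{parallel_lemma} says precisely $v_i=v_{i+1}$. Indeed, the identity $p_{i+d+2}-p_{i+d+3}=-r(p_i-p_{i+1})$ rearranges to $(p_{i+d+2}+rp_i)-(p_{i+d+3}+rp_{i+1})=0$. Hence $v_i$ is independent of $i$; writing $v$ for this common value gives $p_{i+d+2}=v-rp_i$ for all $i\geq 1$, as claimed. Note that $r>0$ because it is a scale factor.

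For the periodicity criterion, I would view the relation as $p_{i+d+2}=T(p_i)$ with $T(x)=v-rx$. If $r=1$, applying the relation twice yields $p_{i+2(d+2)}=v-r(v-rp_i)=p_i$, so $p$ is periodic with period dividing $2(d+2)=2d+4$. Conversely, suppose $p$ is periodic with period $m$. Since $r>0$ we have $1+r\neq 0$, so $T$ has a unique fixed point $w=v/(1+r)$, and $T(x)-w=-r(x-w)$; iterating gives $p_{i+k(d+2)}-w=(-r)^k(p_i-w)$ for all $k\geq 0$. Taking $k=m$ and using $p_{i+m(d+2)}=p_i$ (as $m(d+2)$ is a multiple of the period) gives $((-r)^m-1)(p_i-w)=0$. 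Taking absolute values then forces $r^m=1$, hence $r=1$, provided some $p_i\neq w$.

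The only genuine subtlety — and the step I would flag as the main obstacle — is justifying that not all points of the sequence coincide with the fixed point $w$, for otherwise the last implication collapses. This is exactly where the general-position hypothesis enters: among $p_i,p_{i+1},\dots,p_{i+d}$ the points span a $d$-dimensional affine space and in particular are not all equal, so at least one of them differs from $w$. Everything else in the argument is formal, so I expect the proof to be short once the telescoping identity $v_i=v_{i+1}$ and the conjugation $T(x)-w=-r(x-w)$ are in place.
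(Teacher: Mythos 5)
Your proof is correct. The existence of $v$ is argued exactly as in the paper: the paper defines $v$ by $p_{d+3}=v-rp_1$ and propagates it inductively using Lemma~\ref{parallel_lemma}, which is the same telescoping identity $v_i=v_{i+1}$ you write down. Where you diverge is the ``only if'' direction of the periodicity criterion, which the paper leaves implicit. You conjugate the affine map $T(x)=v-rx$ to $x\mapsto -rx$ about its fixed point $w=v/(1+r)$ and deduce $(-r)^m=1$ from $p_{i+m(d+2)}=p_i$; this is valid, and you correctly identify and close the one gap (some $p_i\neq w$, guaranteed because $p_1,\dots,p_{d+1}$ are in general position and hence not all equal). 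The route more native to the paper is Lemma~\ref{scale_factor_existence}: $|p_{i+d+2}-p_{i+d+3}|=r\,|p_i-p_{i+1}|$ with $|p_i-p_{i+1}|>0$, so periodicity forces $r^k=1$ for a suitable $k$ and hence $r=1$, with no fixed-point or nondegeneracy discussion needed. Your version costs one extra observation but yields slightly more information (the explicit center of symmetry $w$ and the relation $p_{i+k(d+2)}-w=(-r)^k(p_i-w)$, which also makes the period $2d+4$ computation in the final theorem transparent).
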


\begin{proof}
    Define $v$ to be the unique vector satisfying $p_{d+3}=v-rp_1$.
    Since we have $p_{i+d+2}-p_{i+d+3}= -r(p_i-p_{i+1})$ for $i\geq 1$ by Lemma \ref{parallel_lemma}, we get $p_{i+d+2}=v-r p_i$ for $i\geq 1$ inductively.
\end{proof}

\begin{corollary}
    For any $r\in [\cos^{d+2}(\pi/(d+2)),\infty)$, there is a special $d$-dimensional ICS with scale factor $r$.
    In particular, there exists a periodic $d$-dimensional ICS for any $d\geq 2$.
\end{corollary}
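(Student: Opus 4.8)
The plan is to produce every admissible value of the scale factor along the single one-parameter ``diagonal'' family $(t,t,\dots,t)\in\mathbb{R}^{d-1}$ that was already analyzed in the proof of Lemma \ref{product_max}, and then to extract the periodic case from Theorem \ref{shift_vector_existence}.

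First I would check that the diagonal point $(t,\dots,t)$ lies in $U_d$ for every $t\in(0,t_0]$, where $t_0=(4\cos^2(\pi/(d+2)))^{-1}$. Writing $u_n=F^{(n)}(t,\dots,t)$, the recurrence $u_n=u_{n-1}-tu_{n-2}$ gives $u_n>0$ for all $n$ when $t\le 1/4$; and when $1/4<t\le t_0$, writing the roots of $X^2-X+t$ as $\alpha=|\alpha|e^{i\theta}$ and $\beta=\bar\alpha$ with $\theta\in(0,\pi/(d+2)]$, one obtains $u_n=|\alpha|^n\sin((n+2)\theta)/\sin\theta$, which is positive for $1\le n\le d-1$ because $(n+2)\theta\le(d+1)\theta<\pi$. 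Hence $F[1,i]>0$ for $1\le i\le d-1$ and the point is in $U_d$, so by the parametrization it is the truncation of the characteristic sequence of a genuine special ICS.

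Next I would evaluate $g(t):=\sqrt{x_1x_2\cdots x_{d+2}}$ along this family using Corollary \ref{product_explicit}, which on the diagonal reduces to $g(t)=t^{d-1}u_{d-1}/u_{d-2}^2$, a continuous and strictly positive function of $t$ on $(0,t_0]$. By Lemma \ref{product_max} we have $g(t_0)=M:=(2^{d+2}\cos^{d+2}(\pi/(d+2)))^{-1}$, while as $t\to 0^+$ we have $u_{d-1},u_{d-2}\to 1$ and $t^{d-1}\to 0$, so $g(t)\to 0$. Since $(0,t_0]$ is connected and $g$ is continuous with $\max g=M$ attained and $\inf g=0$, the intermediate value theorem shows that $g$ takes every value in $(0,M]$. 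The scale factor of the ICS attached to the diagonal point is $r=(2^{d+2}g(t))^{-1}$, so as $t$ ranges over $(0,t_0]$ the scale factor ranges over exactly $[(2^{d+2}M)^{-1},\infty)=[\cos^{d+2}(\pi/(d+2)),\infty)$, giving the first assertion.

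Finally, for the existence of a periodic ICS I would observe that $\cos^{d+2}(\pi/(d+2))<1$ for all $d\ge 2$, since $0<\pi/(d+2)<\pi/2$; hence $1$ lies in the interval above, and a value of $t$ with $r=1$ yields, via Theorem \ref{shift_vector_existence}, a periodic special $d$-dimensional ICS. I expect the only genuinely delicate point to be the membership claim $(t,\dots,t)\in U_d$ on the whole segment $(0,t_0]$: this requires controlling the sign of each partial polynomial $F^{(i)}(t,\dots,t)$, not merely $F^{(d)}$, and it is precisely the trigonometric formula for $u_n$ borrowed from the proof of Lemma \ref{product_max} that settles it. The remainder is just continuity and the intermediate value theorem.
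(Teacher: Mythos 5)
Your proof is correct, and it is worth noting how it relates to the paper's. The paper disposes of this corollary in one line, ``this follows from Lemma \ref{product_max} and Theorem \ref{shift_vector_existence}'': implicitly it is using that $\sqrt{x_1x_2\cdots x_{d+2}}$ is continuous on $S_d\cong U_d$, attains the maximum $M=(2^{d+2}\cos^{d+2}(\pi/(d+2)))^{-1}$, and has infimum $0$ (Lemma \ref{product_max_exist}), so that by connectedness of $U_d$ it takes every value in $(0,M]$. The connectedness of $U_d$ is never stated in the paper. You instead restrict to the explicit diagonal path $(t,\dots,t)$, $t\in(0,t_0]$ with $t_0=(4\cos^2(\pi/(d+2)))^{-1}$, verify membership in $U_d$ via the sign of $u_n=F^{(n)}(t,\dots,t)$, and run the intermediate value theorem along this single connected one-parameter family. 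This is a genuinely more self-contained route: it buys you the full range $(0,M]$ of $\sqrt{x_1\cdots x_{d+2}}$ without any appeal to the topology of $U_d$, at the cost of redoing the trigonometric analysis of $u_n$ from the proof of Lemma \ref{product_max} on the whole segment rather than only at the maximizer. One small slip: from $u_n=(\alpha^{n+2}-\beta^{n+2})/(\alpha-\beta)$ with $\alpha=|\alpha|e^{i\theta}$, $\beta=\bar\alpha$, one gets $u_n=|\alpha|^{n+1}\sin((n+2)\theta)/\sin\theta$, not $|\alpha|^{n}\sin((n+2)\theta)/\sin\theta$; this does not affect your sign argument, which only needs $(n+2)\theta\leq (d+1)\pi/(d+2)<\pi$. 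The final step (since $\cos^{d+2}(\pi/(d+2))<1$, the value $r=1$ is attained, whence a periodic ICS exists by Theorem \ref{shift_vector_existence}) matches the paper exactly.
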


\begin{proof}
    This follows from Lemma \ref{product_max} and Theorem \ref{shift_vector_existence}.
\end{proof}

\begin{example}
    We illustrate our results in the case $d=3$.
    By Lemma \ref{continued_fraction_periodic}, the characteristic sequence of a special $3$-dimensional ICS is $5$-periodic.
    The parameter space $U_3$ of characteristic sequences is given by
    $$
        U_3 = \{(x_1,x_2)\in \mathbb{R}^2\mid x_1>0,\;x_2>0,\;x_1+x_2< 1\}.
    $$
    By Lemma \ref{Kanda_triple}, the functions $x_3,x_4,x_5$ on $U_3$ are given by
    $$
        x_3=\dfrac{1-x_1-x_2}{1-x_1},\quad x_4=\dfrac{x_1x_2}{(1-x_1)(1-x_2)},\quad x_5=\dfrac{1-x_1-x_2}{1-x_2}.
    $$
    Hence the function $\sqrt{x_1x_2x_3x_4x_5}$ can be written as
    $$
        \sqrt{x_1x_2x_3x_4x_5} = \dfrac{x_1x_2(1-x_1-x_2)}{(1-x_1)(1-x_2)}.
    $$
    By Lemma \ref{product_max}, this function attains the maximum value $(2^5\cos^5(\pi/5))^{-1}=0.0901\dots$ at $(t,t)$, where $t=(4\cos^2(\pi/5))^{-1}=0.3819\dots$.
    Therefore the scale factor $r$ can take any value in $[\cos^5(\pi/5),\infty)$, where $\cos^5(\pi/5)=0.3465\dots$.
    Periodic ICSs correspond to points with $\sqrt{x_1x_2x_3x_4x_5} = 1/32$, which forms a simple closed curve as in the figure below.
\end{example}
\begin{figure}[h]
\includegraphics[width=3.5cm]{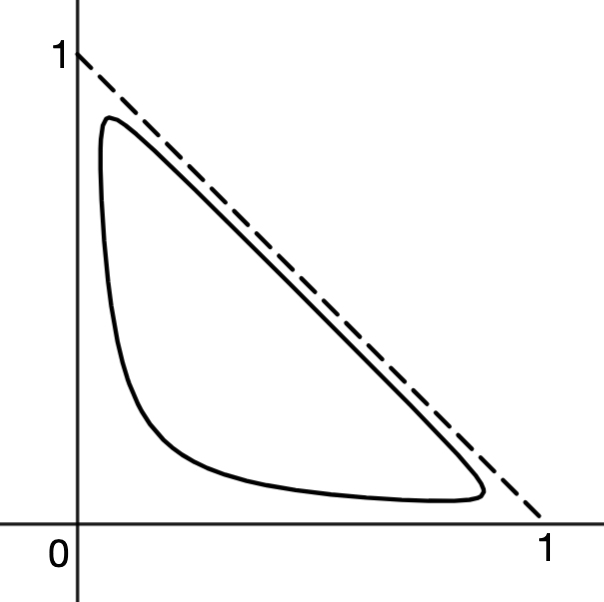}
\caption{Parameters of periodic $3$-dimensional ICSs}
\end{figure}

For example, the following figure shows the $3$-dimensional periodic ICS with period $10$ corresponding to the parameter $(x_1,x_2)=\biggl(\dfrac{1}{16}, \dfrac{45+\sqrt{105}}{64}\biggr)$ from two perspectives.

\begin{figure}[h]
\includegraphics[height=5cm]{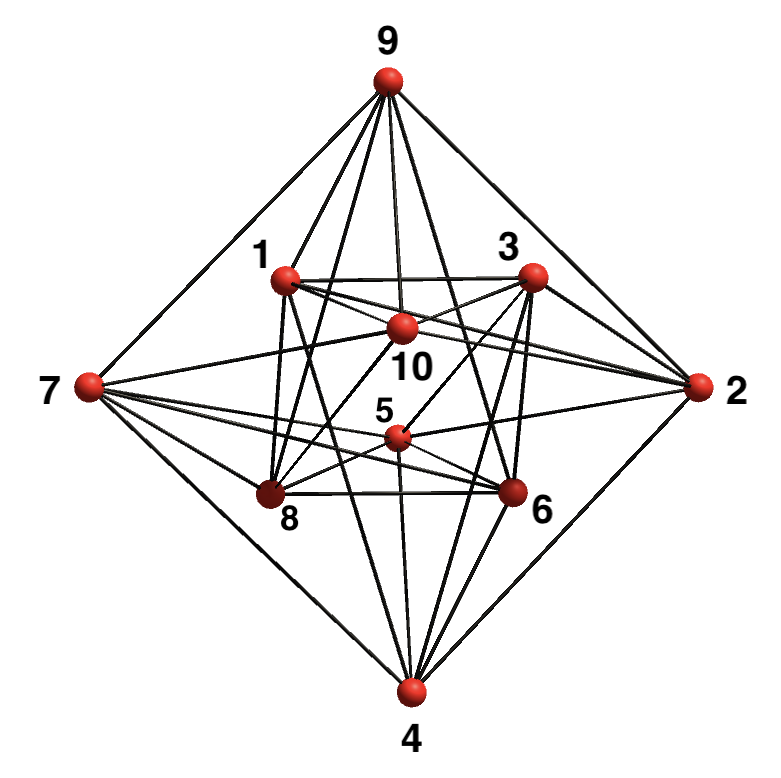}
\includegraphics[height=5cm]{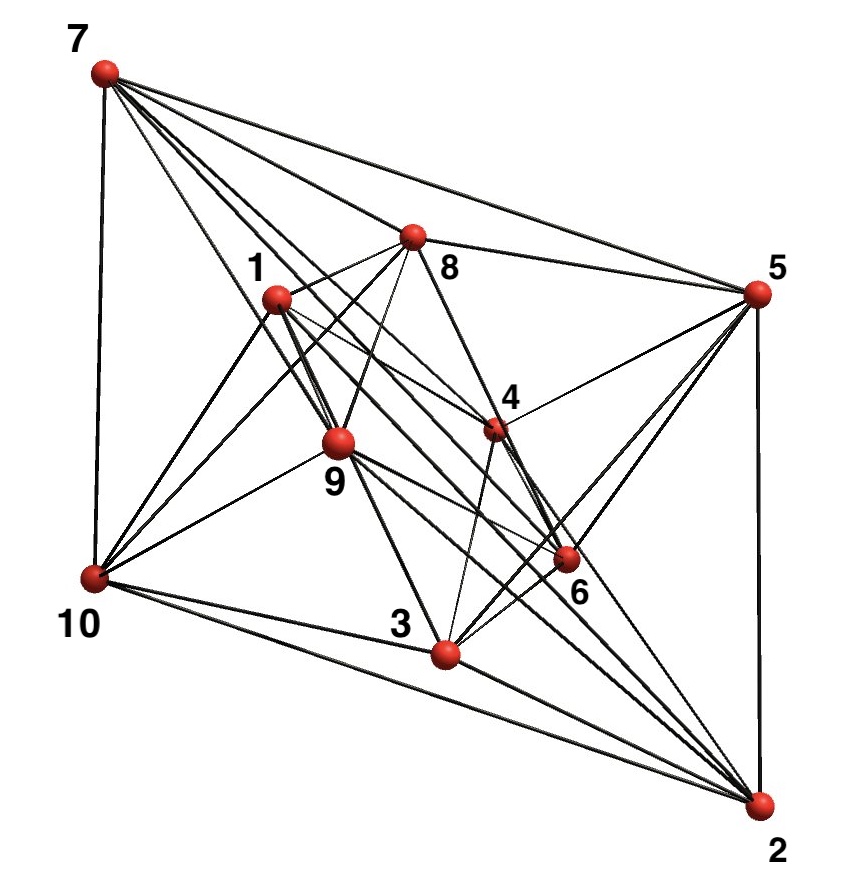}
\caption{A periodic $3$-dimensional ICS}
\end{figure}

Finally, we prove Goddyn's conjecture on periodic ICSs.

\begin{theorem}[Goddyn's conjecture]
    Let $p$ be a $d$-dimensional ICS.
    If $p$ is periodic, then its period must be $2d+4$.
\end{theorem}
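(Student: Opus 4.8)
The plan is to reduce to a special ICS and then read off everything from Theorem \ref{shift_vector_existence}. By Remark \ref{remark_shift} I may replace $p$ by the shift $(p_{i+d-1})_{i=1}^\infty$, which is special and has the same minimal period, so I assume $p$ itself is special; write $m$ for its minimal period. Since $p$ is periodic, Theorem \ref{shift_vector_existence} forces the scale factor to be $r=1$ and supplies a vector $v$ with $p_{i+d+2}=v-p_i$ for all $i\ge 1$. Iterating this relation gives $p_{i+2d+4}=v-p_{i+d+2}=p_i$, so $2d+4$ is a period and therefore $m\mid 2d+4$.

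It remains to pin down $m$ among the divisors of $2d+4$, and I would isolate two elementary constraints. First, any $d+2$ consecutive points $p_i,\dots,p_{i+d+1}$ are pairwise distinct: the points $p_i,\dots,p_{i+d}$ are in general position, while $p_{i+d+1}$, being the center of $S(p_i,\dots,p_{i+d})$, lies at distance equal to the positive radius from each of them. Hence if $m\le d+1$ then $p_{1+m}$ would be one of $p_2,\dots,p_{d+2}$ and yet equal to $p_1$, a contradiction; thus $m\ge d+2$. Second, the value $m=d+2$ is also impossible, since it would give $p_{i+d+2}=p_i$, and combined with $p_{i+d+2}=v-p_i$ this forces $p_i=v/2$ for every $i$, contradicting the distinctness just proved. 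Therefore $m>d+2$.

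Combining $m\mid 2d+4$ with $m>d+2$ then finishes the argument: the integer $(2d+4)/m$ is positive and strictly smaller than $(2d+4)/(d+2)=2$, hence equals $1$, giving $m=2d+4$. All the computations are light once the relation $p_{i+d+2}=v-p_i$ is in hand; I expect the only delicate point to be the interplay between the distinctness bound $m\ge d+2$ and the degenerate case $m=d+2$, since it is exactly these two observations that rule out every proper divisor of $2d+4$ and thereby upgrade ``the period divides $2d+4$'' to ``the period equals $2d+4$.''
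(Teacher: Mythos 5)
Your proof is correct and follows the same overall skeleton as the paper's: reduce to a special ICS, use Theorem \ref{shift_vector_existence} with $r=1$ to get $p_{i+2d+4}=p_i$ and hence $m\mid 2d+4$, then rule out all divisors $\leq d+2$. The one step where you genuinely diverge is the exclusion of $m=d+2$. The paper argues geometrically: if $m=d+2$, then each $p_i$ is the circumcenter of the remaining $d+1$ points of the cycle, so all pairwise distances among $p_1,\dots,p_{d+2}$ are equal, and $d+2$ mutually equidistant points cannot exist in $\mathbb{R}^d$. You instead combine $p_{i+d+2}=p_i$ with the relation $p_{i+d+2}=v-p_i$ to force $p_i=v/2$ for all $i$, contradicting the distinctness you already established. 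Your version is arguably cleaner, since it reuses the structure theorem that is needed anyway rather than invoking a separate fact about equidistant point sets; the paper's version has the mild virtue of not depending on Theorem \ref{shift_vector_existence} for that step. Your preliminary reduction via the shift is also slightly roundabout — the paper's Remark \ref{remark_shift} already records that a periodic ICS is special, so no shift (and no argument that shifting preserves the minimal period) is needed — but this is harmless.
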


\begin{proof}
    First we note that $p$ is special since it is periodic.
    Let $m$ be the period of $p$.
    By definition of ICS, the points $p_1,p_2,\dots,p_{d+1}$ are distinct, so we have $m>d+1$.
    Suppose that $m=d+2$.
    For each $i\in \{1,2,\dots,d+2\}$, the point $p_{i+d+2}$ is the center of $S(p_{i+1},\dots,p_{i+d+1})$.
    By the periodicity, this implies that $p_i$ is the center of $S(p_1,\dots,p_{i-1},p_{i+1},\dots,p_{d+2})$.
    Varying $i$, we conclude that $|p_i-p_j|$ is constant for $1\leq i<j\leq d+2$, which is impossible in $\mathbb{R}^d$.
    Therefore we have $m>d+2$.

    On the other hand, the scale factor of $p$ must be $1$ by the periodicity.
    By Lemma \ref{shift_vector_existence}, there is some vector $v\in \mathbb{R}^d$ such that $p_{i+d+2}=v-p_i$ holds for $i\geq 1$.
    In particular, we have
    $$
        p_{i+2d+4} = v-(v-p_i) = p_i\quad (i\geq 1),
    $$
    so we must have $m\mid 2d+4$.
    Therefore the only possibility is $m=2d+4$.
\end{proof}

\printbibliography

@online{OPG_circumcenter,
    title = {A conjecture on iterated circumcentres, Open Problem Garden},
    author = {Matt DeVos},
    url = {http://www.openproblemgarden.org/op/a_conjecture_on_iterated_circumcentres},
    urldate = {May 2, 2024}
}

@article{Lyness_cross_ratio,
author = {R.C. Lyness},
title = {Cycles},
journal = {Mathematical Gazette},
volume = {45},
pages = {207 -- 209},
year = {1961}
}

@article{Ardanuy,
author = {R. L. Ardanuy},
title = {On Goddyn’s iterated circumcenters conjecture},
journal = {Arhimede Mathematical Journal},
volume = {10},
pages = {134 -- 144},
year = {2023}
}

@mastersthesis{Griffiths2012,
    author = {Jonny Griffiths},
    title = {Lyness Cycles, Elliptic Curves, and Hikorski Triples},
    school = {University of East Anglia},
    year = {2012},
    url = {https://api.semanticscholar.org/CorpusID:124516180}
}

\end{document}